\documentclass[11pt]{article}
\usepackage[utf8]{inputenc}
\usepackage{optidef}
\RequirePackage{amsthm,amsmath,amsfonts,amssymb}
\RequirePackage[numbers]{natbib}
\RequirePackage[colorlinks,citecolor=blue,urlcolor=blue]{hyperref}
\RequirePackage{graphicx}
\usepackage[margin=25pt,font=small,labelfont=bf]{caption}
\usepackage{enumerate}

\newtheorem{thm}{Theorem}[section]
\newtheorem{lemma}[thm]{Lemma}
\newtheorem{prop}[thm]{Proposition}
\newtheorem{cor}[thm]{Corollary}

\newtheorem*{thm*}{Theorem}
\newtheorem*{lemma*}{Lemma}
\newtheorem*{prop*}{Proposition}
\newtheorem*{cor*}{Corollary}
\newtheorem*{conj*}{Conjecture}

\theoremstyle{definition}
\newtheorem{defn}[thm]{Definition}
\newtheorem{ques}[thm]{Question}
\newtheorem{ex}[thm]{Example}
\newtheorem{remark}[thm]{Remark}

\DeclareMathOperator{\tr}{Tr}
\DeclareMathOperator{\gcr}{GCR}
\DeclareMathOperator{\mlt}{MLT}
\DeclareMathOperator{\rank}{rank}
\DeclareMathOperator{\gr}{Gr}

\DeclareMathOperator{\orthP}{OP}

\newcommand{\RR}{\mathbb{R}}
\renewcommand{\SS}{\mathcal{S}}

\usepackage{cleveref}

\title{Maximum likelihood thresholds of generic linear concentration models}
\author{Daniel Irving Bernstein, Steven J.~Gortler, and Louis Theran}
\date{\today}

\usepackage[margin=1in]{geometry}
\textheight     9in

\textwidth       6.5in

\begin{document}

\maketitle

\begin{abstract}
    The maximum likelihood threshold of a statistical model is the minimum number of datapoints required to fit the model via maximum likelihood estimation.
    In this paper we determine the maximum likelihood thresholds of generic linear concentration models.
    This turns out to be the number that one might expect from a naive dimension count,
    which is nontrivial to prove given that the maximum likelihood threshold is a semi-algebraic concept.
    We also describe geometrically how a linear concentration model can fail to exhibit this generic behavior.
\end{abstract}

\section{Introduction}

Fitting a model with relatively small datasets is a challenging problem that arises in many applications~\cite[Chapter~18]{hastie},
including modeling networks related to gene regulation~\cite{dobra2004sparse,schafer2005empirical,wu2003interactive} and metabolic pathways~\cite{krumsiek2011gaussian}.
In this paper, we are concerned with a special case of the following.
\begin{ques}\label{ques: general Gaussian models}
    Let $\mathcal{M}$ be an $n$-variate Gaussian model, i.e.~a parameterized family of $n$-variate Gaussian density functions.
    What is the minimum $d$ such that given a dataset consisting of $d$ datapoints,\footnote{assumed to be independently and identically distributed according to a probability distribution that is mutually absolutely continuous with respect to Lebesgue measure on $\mathbb{R}^n$} then, almost surely, there exists some $f \in \mathcal{M}$ of maximum likelihood?
\end{ques}

The $d$ in Question~\ref{ques: general Gaussian models} for a particular $\mathcal{M}$ is called the \emph{maximum likelihood threshold} of $\mathcal{M}$.
One should think of the maximum likelihood threshold as the minimum number of datapoints required to fit a model,
a quantity that becomes relevant in the high-dimensional settings, i.e.~when there are few datapoints relative to the number of variables.
In this paper, we consider the particular case of Question~\ref{ques: general Gaussian models} that takes $\mathcal{M}$ to be a \emph{linear concentration model}. Such models consist of all $n$-variate normal distributions whose inverse covariance matrix (often called the \emph{concentration matrix}) lies in a given linear subspace $L$ of $n\times n$ symmetric matrices; we denote the corresponding model by $\mathcal{M}_L$.
When $L$ is defined by setting certain off-diagonal entries to zero, indexed by the non-edges of some graph,
$\mathcal{M}_L$ is called a \emph{Gaussian graphical model}.

Maximum likelihood thresholds of Gaussian graphical models, and related quantities, have been studied extensively in the algebraic statistics literature.
They were originally introduced by Dempster~\cite{dempster1972covariance}.
Buhl~\cite{buhl1993existence} gave the first bounds on the MLT of a Gaussian graphical model in terms of combinatorial properties of its defining graph.
In particular, he showed that the MLT of a Gaussian graphical model is bound between the defining graph's clique number, and one plus its treewidth.
Uhler~\cite{uhler2012geometry} gave a procedure using Gr\"obner bases to compute an upper bound on the MLT of a Gaussian graphical model.
Gross and Sullivant~\cite{grossSullivantMLT} gave a rigidity-theoretic interpretation of Uhler's upper bound and used rigidity theoretic results to bound and exactly compute the MLTs of certain families of graphs.
Blekherman and Sinn~\cite{blekherman2019maximum} showed that Uhler's upper bound is not sharp in general.
In previous work with Dewar, Nixon, and Sitharam, we gave a rigidity theoretic interpretation of the MLT of a Gaussian graphical model,
and used it to compute the maximum likelihood thresholds of several families of graphs~\cite{bernstein2021maximum,bernstein2022computing}.
The maximum likelihood thresholds of \emph{directed} Gaussian graphical models are also well understood~\cite{drton2019maximum}, 
although they are not linear concentration models.

Given the recent success of rigidity theoretic ideas at demystifying maximum likelihood thresholds of 
Gaussian graphical models,
one might wonder if these ideas can be imported to other models.
This paper is the first step along these lines.
The main result of this paper is Theorem~\ref{thm: main} which gives the maximum likelihood threshold 
$\mathcal{M}_L$ where $L$ is a \emph{generic} linear subspace of symmetric matrices.
In short, it says that for a generic $L$ the maximum likelihood threshold
can be computed by a naive dimension count, using no more information about $L$ than its dimension.
This is the nicest possible answer one could 
hope for, but since we are working over the real field, the result does not immediately follow from 
algebro-geometric considerations.
While generic linear concentration models are not used in practice,
understanding their behavior gives us a baseline that we can compare other families of linear 
concentration models against.
We also expect that, by using tangent spaces, ideas developed for understanding maximum likelihood 
thresholds of linear concentration models
can be used to understand models obtained via smooth constraints on the inverse 
covariance matrix.

One appealing feature of our approach is that we are explicit about which linear subspaces fail to attain this naive bound;
these are the subsets of the Grassmannian described in Lemmas~\ref{lem:psd2} and~\ref{lem:hard2}.
Linear concentration models that appear in practice often exhibit behavior that is different from the generic 
one, because they are defined by linear constraints with a special structure.  Notably, while the number of 
edges in a graph $G$ determines the dimension of the associated Gaussian graphical model, the MLT 
depends in a subtle way on the combinatorial structure of $G$ \cite{blekherman2019maximum,bernstein2021maximum}, 
and it can be much lower than naive dimension (or edge) counting would predict.
Lemmas~\ref{lem:psd2} and~\ref{lem:hard2} can then be interpreted as describing geometric conditions on 
$L$ that lead to more complicated behavior of the MLT.

We begin with Theorem~\ref{thm: existence of optimum criteria}, which establishes two conditions that are equivalent to the existence of a maximum likelihood estimate in $\mathcal{M}_L$ given a particular dataset.
We then use Theorem~\ref{thm: existence of optimum criteria} to establish a lower bound for the maximum likelihood threshold of $\mathcal{M}_L$ in Proposition~\ref{prop: hard direction}, and then again to establish that this lower bound is also an upper bound in Proposition~\ref{prop: easy direction}.
Section~\ref{sec:gcr} studies an easily computable upper bound on the maximum likelihood threshold called the 
\emph{generic completion rank}. This generalizes a notion of the same name for Gaussian graphical models.
Using this  language, Theorem~\ref{thm: main} says that the generic completion rank of a generic linear space is equal to its maximum likelihood threshold.

\subsection{Notation}
In what follows, $\mathcal{S}^n$ denotes the vector space of $n\times n$ real symmetric matrices,
$\mathcal{S}^n(d)$ denotes the closed subvariety of matrices of rank at most $d$,
$\mathcal{S}_0^n(d)$ denotes the constructible set of matrices of rank exactly $d$,
$\mathcal{S}^n_+$ denotes the closed subcone of positive semidefinite (PSD) matrices,
$\mathcal{S}^n_{++}$ denotes the open subcone of positive definite matrices, and 
$\mathcal{S}_+^n(d)$ denotes the set of 
PSD matrices of rank at most $d$.
A \emph{linear concentration model} is a set of the form $L \cap \mathcal{S}^n_{++}$
where $L \subseteq \mathcal{S}^n$ is a linear space.
The identity matrix of size $k\times k$ is denoted $I_k$.

\section{Motivation in detail}\label{sec:motivation}
Given datapoints $x_1,\dots,x_d \in \mathbb{R}^n$ with sample covariance matrix\footnote{this is the matrix $\frac{1}{d}\sum_{i=1}^d (x-\mu)(x-\mu)^T$ where $\mu = \frac{1}{d} \sum_{i=1}^d x_i$} $S$
the \emph{maximum likelihood estimate in the linear concentration model for $L$}
is the inverse of the solution, if it exists, 
to the following optimization problem (see e.g.~\cite[p. 632]{hastie})

\begin{mini}|l|
   {K}{\tr(SK) - \log \det K}{}{\label{eq: mle optimization}}
  \addConstraint{K \in \mathcal{S}^n_{++} \ {\rm and} \ K \in L}.
\end{mini}

The \emph{maximum likelihood threshold} of a linear space $L \subseteq \mathcal{S}^n$, denoted $\mlt(L)$, is the minimum $d$
such that \eqref{eq: mle optimization} has a solution for all generic $S \in \mathcal{S}^n_+$ of rank $d$.
Note that if $L$ does not contain a positive definite matrix, then $\mlt(L)$ is infinite
so we restrict our focus to linear spaces that contain a positive definite matrix.
The set $\mathcal{S}^n_{++}\cap L$ is called the \emph{feasible region} its elements are called \emph{feasible points}.
Given an inner product space $V$ and a linear subspace $L \subseteq V$,
$L^\perp$ denotes the orthogonal complement of $L$
and $\pi_L:V \rightarrow L$ denotes orthogonal projection onto $L$ with kernel $L^\perp$.
Here, we are mainly interested in the $\binom{n+1}{2}$-dimensional space 
$\mathcal{S}^n$ with the inner product of $\Omega, S\in \mathcal{S}^n$ given by 
$\tr(\Omega S)$. If $\Omega$ and $S$ are both PSD, then $\tr(\Omega S) = 0$ if and only if $\Omega S = 0$

\begin{thm}\label{thm: existence of optimum criteria}
    Let $S \in \mathcal{S}^n_{+}\setminus\{0\}$ and let $L \subseteq \mathcal{S}^n$ be a linear subspace 
    such that $L \cap \mathcal{S}^n_{++} \neq \emptyset$.
    Then the following are equivalent:
    \begin{enumerate}[(1)]
        \item there is a (unique) optimum solution to the problem \eqref{eq: mle optimization},
        \item there exists a matrix $K \in \mathcal{S}^n_{++}$ such that $S - K^{-1} \in L^\perp$, and
        \item there does not exist a non-zero matrix $\Omega \in \mathcal{S}^n_{+}\cap L$ such that 
        $\tr(\Omega S) = 0$ (equivalently, that $\Omega S = 0$).
    \end{enumerate}
    In this case, the optimum is the unique matrix $K$ satisfying $K \in L \cap \mathcal{S}^n_{++}$ and $S - K^{-1} \in L^\perp$.
\end{thm}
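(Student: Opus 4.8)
The plan is to prove the chain $(1) \Rightarrow (2) \Rightarrow (3) \Rightarrow (1)$, treating the optimization problem \eqref{eq: mle optimization} via convex analysis since the objective $\tr(SK) - \log\det K$ is strictly convex on $\mathcal{S}^n_{++}$. First I would record the gradient computation: for the function $f(K) = \tr(SK) - \log\det K$ on the open convex set $\mathcal{S}^n_{++}$, one has $\nabla f(K) = S - K^{-1}$, using the standard identity $\nabla(\log\det K) = K^{-1}$. The feasible region is $(L \cap \mathcal{S}^n_{++})$, which is open in the affine space $L$, so at an interior optimum the gradient must be orthogonal to $L$, i.e.\ $\pi_L(S - K^{-1}) = 0$, which is exactly $S - K^{-1} \in L^\perp$. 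This gives $(1) \Rightarrow (2)$ directly, and conversely, since $f$ is strictly convex, any $K \in L \cap \mathcal{S}^n_{++}$ with vanishing projected gradient is the unique global minimizer over $L \cap \mathcal{S}^n_{++}$ — this simultaneously proves $(2) \Rightarrow (1)$, the uniqueness claim, and the final identification of the optimum. So the real content is the existence half: showing $(3)$ guarantees that the infimum is attained.

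For $(1) \Rightarrow (3)$ (contrapositive: $\neg(3) \Rightarrow \neg(1)$), suppose there is a nonzero $\Omega \in \mathcal{S}^n_+ \cap L$ with $\Omega S = 0$. I would exhibit a feasible ray along which $f$ does not attain its infimum, or more precisely show the infimum is not attained. Take any feasible $K_0 \in L \cap \mathcal{S}^n_{++}$ and consider $K_t = K_0 + t\Omega$ for $t \ge 0$; this stays in $L \cap \mathcal{S}^n_{++}$. Then $\tr(S K_t) = \tr(S K_0) + t\,\tr(S\Omega) = \tr(S K_0)$ since $\tr(S\Omega) = 0$, while $\log\det K_t \to +\infty$ as $t \to \infty$ because $\Omega$ contributes at least one eigenvalue growing linearly (here I'd use $\det(K_0 + t\Omega) \ge \det(K_0)\cdot(1 + t\lambda)$ for a positive eigenvalue $\lambda$ of $K_0^{-1/2}\Omega K_0^{-1/2}$, or split off the kernel of $\Omega$). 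Hence $f(K_t) \to -\infty$, so no optimum exists.

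The main obstacle is $(3) \Rightarrow (1)$: showing that the absence of such an $\Omega$ forces the infimum to be attained. The standard approach is to show $f$ is \emph{coercive} relative to the feasible region, i.e.\ that its sublevel sets $\{K \in L \cap \mathcal{S}^n_{++} : f(K) \le c\}$ are bounded and bounded away from the boundary $\partial \mathcal{S}^n_+$, so that a minimizing sequence has a limit point in $L \cap \mathcal{S}^n_{++}$. The term $-\log\det K \to +\infty$ as $K$ approaches a rank-deficient boundary point, which handles escape to the boundary \emph{provided} the eigenvalues do not run off to infinity in a direction where $\tr(SK)$ fails to grow. Concretely, I would argue by contradiction: take a minimizing sequence $K_m$ with $f(K_m) \le c$; if $\|K_m\| \to \infty$, pass to a subsequence so that $K_m / \|K_m\| \to \Omega$ for some $\Omega \in \mathcal{S}^n_+ \cap L$ with $\|\Omega\| = 1$ (closedness of $\mathcal{S}^n_+ \cap L$). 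Boundedness of $\tr(SK_m) = f(K_m) + \log\det K_m$ combined with an upper bound $\log\det K_m \le n\log\|K_m\| + O(1)$ and $\tr(SK_m) \ge 0$ forces $\tr(SK_m)/\|K_m\| \to 0$, hence $\tr(S\Omega) = 0$; since $S, \Omega \succeq 0$ this gives $S\Omega = 0$, contradicting $(3)$. So $\|K_m\|$ is bounded; then $\det K_m$ is bounded above, and $f(K_m) \le c$ forces $\log \det K_m \ge \tr(SK_m) - c \ge -c$, bounding $\det K_m$ away from $0$, so a convergent subsequence has a limit $K_\infty \in L \cap \mathcal{S}^n_{++}$, which is the optimum by continuity. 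The delicate point to get right is the two-sided control on $\log\det K_m$ and the use of $\tr(S\Omega) = 0 \Rightarrow S\Omega = 0$ (the fact quoted just before the theorem) to close the loop with condition $(3)$.
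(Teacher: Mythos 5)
Your argument for $(3)\Rightarrow(1)$ is correct and takes a genuinely different route from the paper's: you run a minimizing-sequence/recession-direction analysis (if $\|K_m\|\to\infty$, normalize to extract a unit-norm $\Omega\in L\cap\mathcal{S}^n_+$ with $\tr(S\Omega)=0$, hence $S\Omega=0$, contradicting (3)), whereas the paper fixes a feasible $K$ and shows $f(K+\lambda\Omega)\to\infty$ along every ray as $\lambda$ approaches the feasibility bound $\lambda_\Omega$, splitting into the cases $\Omega\in\mathcal{S}^n_+$ and $\Omega\notin\mathcal{S}^n_+$. Both work; yours avoids having to argue uniformly over the sphere of directions. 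Your $(1)\Rightarrow(2)$ and your contrapositive proof of $(1)\Rightarrow(3)$ are also fine.

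There is, however, a genuine gap in your $(2)\Rightarrow(1)$. Condition (2) as stated only asks for \emph{some} $K\in\mathcal{S}^n_{++}$ with $S-K^{-1}\in L^\perp$; it does \emph{not} require $K\in L$. (That is the point of (2): it is a positive-definite completion condition on $S$, not a first-order optimality condition.) Your argument --- ``any $K\in L\cap\mathcal{S}^n_{++}$ with vanishing projected gradient is the unique global minimizer'' --- only covers the case where the witness $K$ happens to be feasible, so your cycle of implications does not close: you have $(1)\Leftrightarrow(3)$ and $(1)\Rightarrow(2)$, but nothing carrying (2) back to (1) or (3). The fix is short and is what the paper does: given the witness $K$ of (2) and any nonzero $\Omega\in\mathcal{S}^n_+\cap L$ with $\tr(\Omega S)=0$, orthogonality of $S-K^{-1}$ to $L\ni\Omega$ gives $\tr\bigl(\Omega(S-K^{-1})\bigr)=0$, hence $\tr(\Omega K^{-1})=\tr(\Omega S)=0$; since $K^{-1}$ is positive definite and $\Omega$ is PSD this forces $\Omega=0$, i.e.\ $(2)\Rightarrow(3)$, which combined with your $(3)\Rightarrow(1)$ completes the equivalence.
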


\begin{proof}
    $(1)\implies (2)$: 
    Let $K$ be the assumed optimal point.
    Since the objective function is strictly convex, $K$ is the unique optimizer.
    Since $K$ is a feasible point, $K \in L \cap \mathcal{S}^n_{++}$.
    The gradient of the objective function at $K$ is $S-K^{-1}$.  As $K$ is in the relative
    interior of the feasible region, the principle of Lagrange multipliers 
    says that the gradient must be orthogonal to the constraints at an optimal 
    value.  Hence, $S - K^{-1}\in L^\perp$, as desired.
    
    $(2) \implies (3)$: Let $K$ satisfy $(2)$ and let $\Omega  \in \mathcal{S}^n_+ \cap L$ such 
    that  $\tr(\Omega S) = 0$ be given.
    From $(2)$, $\Omega $ and the gradient $S - K^{-1}$ are orthogonal.  We then compute 
    \[
        0 = \tr(\Omega (S - K^{-1})) = \tr(\Omega S) - \tr(\Omega K^{-1}) = \tr(\Omega K^{-1})
    \]
    using bilinearity of the inner product.
    Since $\Omega$ and $K^{-1}$ are PSD, the above implies that the column span 
    of $K^{-1}$ lies in the kernel of $\Omega$.  Since $K^{-1}$ is full rank, it follows
    that $\Omega$ has an $n$-dimensional kernel, so $\Omega=0$.
        
    $(3) \implies (1)$: 
    Suppose (3) holds and let $K \in L \cap \mathcal{S}^n_{++}$ (recall that we assume that such a 
    $K$ exists). Define $L_0$ to be the intersection of $L$ and the unit sphere in $\mathcal{S}^n$, i.e.
    \[
        L_0 := \{\Omega \in L : \tr(\Omega^2) = 1\}.
    \]
    As $K$ is  positive definite, for each 
    $\Omega \in L_0$ there exists $\lambda > 0$ such that $K + \lambda \Omega \in L \cap \mathcal{S}^n_{++}$;
    define $\lambda_\Omega$ to be the supremum over all such $\lambda$.
    Define $f(K) := \tr(SK) - \log\det K$.
    We will show that for any $\Omega \in L_0$,
    \begin{equation}\label{eq: diverges to positive infinity}
        f(K + \lambda \Omega) \rightarrow \infty \quad {\rm as} \quad \lambda \rightarrow \lambda_\Omega.
    \end{equation}
    It will then follow that $f(K + \lambda \Omega) > f(K)$ outside of a compact neighborhood of $K$.
    The restriction of $f$ to this neighborhood has a global minimum (as a continuous function on a 
    compact set), which, by construction, is a global minimum of $f$.  In other words, the problem
    \eqref{eq: mle optimization} has an optimal solution.  The rest of the proof establishes 
    Equation \eqref{eq: diverges to positive infinity} by a case analysis.
    
    If $\Omega \notin \mathcal{S}^n_{+}$ then $\lambda_\Omega$ is finite, since 
    for large enough $t$, the signature of $K + t\Omega$ will agree with that of $\Omega$.
    Hence $K + t_\lambda \Omega$ lies on the boundary of the PSD cone $\mathcal{S}^n_+$, 
    making it singular.
    Hence, for $0 < \lambda < \lambda_\Omega$, the term $\tr(S(K + \lambda \Omega))$ 
    is bounded. Then~\eqref{eq: diverges to positive infinity} follows since
    \[
        \log \det(K + \lambda \Omega)\to -\infty\qquad \text{as $\lambda\to \lambda_\Omega$}.
    \]    
    If $\Omega \in \mathcal{S}^n_{+}$ then $\lambda_\Omega = \infty$, since for any $\lambda > 0$, 
    $K + \lambda \Omega$ is the sum of a PD and a PSD matrix, which is again PD.
    Since $\det(K + \lambda \Omega)$ is a polynomial in $\lambda$ of degree $n$, it follows that
    \[
        f(K + \lambda \Omega) = \tr(SK) + \lambda \overbrace{\tr(S\Omega)}^{> 0} - O(\log(\lambda)).
    \]
    This diverges to $\infty$ as $\lambda \rightarrow \infty$.
\end{proof}

\section{Generic linear subspaces}\label{sec:generic}
In this section we derive the maximum likelihood threshold of a generic linear subspace $L \subseteq \mathcal{S}^n$.
A note on our use of geometric and topological language is in order.
Let $V$ be a finite-dimensional vector space.
Throughout this paper, $V$ will either be $\mathbb{R}^n$, equipped with the standard inner product,
or $\mathcal{S}^n$, equipped with the trace-norm inner product, i.e.~where the inner product of matrices $A$ and $B$ is defined to be $\tr(AB)$.

The \emph{Zariski topology} on $V$ is the topology whose closed sets are varieties.
We use the convention that varieties are the vanishing loci of systems of polynomials.
In particular, they need not be irreducible.
The \emph{Euclidean topology} on $V$ is the more familiar topology derived from the metric associated to the inner product on $V$.
The topology induced on the projective space $\mathbb{P}(V)$ by the Zariski/Euclidean and the usual quotient map will be referred to by the same name.

The Zariski/Euclidean topology on any subset $X$ of $V$ or $\mathbb{P}(V)$ is the subspace topology on $X$ induced by the Zariski/Euclidean topology on $V$ or $\mathbb{P}(V)$.
Topological statements that do not explicitly specify a topology should be interpreted in the Euclidean topology.
We will always explicitly say so when we are using the Zariski topology.
For any subset $X$ of $V$ or $\mathbb{P}(V)$, the Zariski topology is coarser than the Euclidean topology.

Recall that the \emph{interior} of a subset $X$ of any topological space is the union of all open sets contained in $X$.
Now let $X \subseteq V$.
The \emph{affine hull} of $X$ is the intersection of all affine subspaces (i.e.~translations of linear subspaces) containing $X$.
The affine hull of $X$ comes equipped with the subspace topology induced by the Euclidean topology on $V$.
The \emph{relative interior} of $X$ is the interior of $X$ in this subspace topology.
Every convex set has non-empty relative interior.

\begin{defn}\label{defn: genericity}
    Let $X$ be an irreducible semi-algebraic set. We say that a statement is \emph{generically} true on $X$,
    or holds at \emph{generic points} of $X$ if it is true on a nonempty Zariski-open subset of $X$.
\end{defn}

Definition~\ref{defn: genericity} requires $X$ to be irreducible so that
the set of points where a generically true statement fails is nowhere-dense in $X$.
The following lemma gives a sufficient condition for a semi-algebraic subset of a semi-algebraic
set to be contained in a Zariski-closed subset.
\begin{lemma}\label{lemma: generic means you can wiggle stuff}
    Let $X$ be a semi-algebraic subset of some real vector space $V$ and let $E \subseteq X$ be a semi-algebraic subset.
    If for all $y \in E$ and for every Euclidean neighborhood $U$ of $y$ in $X$,
    there exists some $x \in X \setminus E$, then $E$ is contained in a Zariski-closed proper 
    subset of $X$.
\end{lemma}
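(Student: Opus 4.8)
The plan is to pass to the complement $X \setminus E$ and argue by a dimension count. First I would unwind the hypothesis: it says exactly that $X \setminus E$ is (Euclidean-)dense in $X$. Indeed, the assertion that a point $y \in E$ lies in the Euclidean closure of $X \setminus E$ is precisely that every neighborhood of $y$ in $X$ meets $X \setminus E$; and every point of $X \setminus E$ is trivially in its own closure. Hence $X \subseteq \overline{X \setminus E}$, where the bar denotes Euclidean closure in $V$. We may assume $E \neq \emptyset$ (otherwise there is nothing to prove), and then the hypothesis forces $X \setminus E \neq \emptyset$ as well. Note also that $X \setminus E$ is semi-algebraic, being a difference of semi-algebraic sets.

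Next I would extract a strict drop in dimension. Since $\overline{X \setminus E} \supseteq X \supseteq E$ while $E$ is disjoint from $X \setminus E$, we get $E \subseteq \overline{X\setminus E}\setminus(X\setminus E)$. The frontier of a semi-algebraic set $S$ satisfies $\dim\bigl(\overline{S}\setminus S\bigr) < \dim S$ (a standard fact; see e.g.\ Bochnak--Coste--Roy, \emph{Real Algebraic Geometry}), so applying this to $S = X \setminus E$ and using monotonicity of dimension yields
\[
\dim E \;\le\; \dim\bigl(\overline{X\setminus E}\setminus(X\setminus E)\bigr)\;<\;\dim(X\setminus E)\;\le\;\dim X.
\]

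Finally I would pass to Zariski closures. Since the dimension of a semi-algebraic set equals that of its Zariski closure, $\dim \overline{E}^{\mathrm{Zar}} = \dim E < \dim X = \dim \overline{X}^{\mathrm{Zar}}$. If $\overline{E}^{\mathrm{Zar}}$ contained $X$ it would contain $\overline{X}^{\mathrm{Zar}}$ and hence have dimension at least $\dim X$, a contradiction; so $\overline{E}^{\mathrm{Zar}}$ does not contain $X$. Therefore $Z := \overline{E}^{\mathrm{Zar}} \cap X$ is a proper subset of $X$, it is Zariski-closed in $X$ by definition of the subspace topology, and it contains $E$ --- exactly what is claimed.

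The only substantive ingredients are three standard structural facts about semi-algebraic sets: closure under set difference, the strict dimension drop $\dim(\overline{S}\setminus S)<\dim S$ across the frontier, and the equality of semi-algebraic dimension with Zariski-closure dimension. I expect the one place to be careful is the first step, namely correctly reading the ``wiggling'' hypothesis as density of $X \setminus E$ in $X$ and thus as the inequality $\dim E < \dim X$; given that, the passage to Zariski closures is immediate. The degenerate cases ($E$ empty, or $X$ empty) are dealt with at the outset.
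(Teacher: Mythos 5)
Your proof is correct, and it reaches the key inequality $\dim E < \dim X$ by a genuinely different route than the paper. The paper first reads the hypothesis as saying that $E$ is nowhere dense in $X$, then stratifies $E$ into finitely many smooth semi-algebraic pieces and argues that a nowhere-dense smooth submanifold of a smooth $X$ must have strictly smaller dimension; since $X$ itself need not be smooth, it must additionally split off the singular locus of $X$ (which has lower dimension) and run the argument on the smooth locus. You instead read the hypothesis as density of $X \setminus E$ in $X$, observe $E \subseteq \overline{X\setminus E}\setminus(X\setminus E)$, and invoke the single semi-algebraic frontier inequality $\dim(\overline{S}\setminus S) < \dim S$ from Bochnak--Coste--Roy. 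This buys you a shorter argument with no stratification and no smooth/singular case division; the paper's version makes the underlying manifold-theoretic intuition (``nowhere dense implies lower-dimensional'') more explicit but at the cost of the extra bookkeeping. From the dimension drop onward the two proofs coincide: both use the fact that a semi-algebraic set and its real Zariski closure have the same dimension, so that $\overline{E}^{\mathrm{Zar}} \cap X$ is the required Zariski-closed proper subset. One small reading issue: the lemma as printed says ``there exists some $x \in X\setminus E$'' where it clearly means $x \in U \cap (X\setminus E)$; you interpreted it as intended, which is the right call.
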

\begin{proof}
First assume $X$ is smooth.
As $E$ is semialgebraic, $E$ can be stratified into finitely many smooth semialgebraic sets $E_1,\dots,E_k$~\cite{basuAlgorithmsInRealAlgebraicGeometry}.
Our assumptions imply that $E$ is nowhere dense in $X$;
the same therefore holds for each $E_i$.
Since each $E_i$ is a nowhere-dense smooth submanifold of $X$,
each $E_i$ has strictly lower dimension than $X$.
Thus $E$ is of lower dimension
than $X$.
Meanwhile, by the results in \cite[Section 2.8]{boch}, 
the real Zariski closure $Z$ of $E$ in $V$ has the same dimension as $E$. Hence 
$X \nsubseteq Z$, and so $Z \cap X$ is the desired subset.

If $X$ is not smooth, then the non-smooth locus of $X$ is contained in a semi-algebraic subset $W'$ of $X$ of lower dimension
(this follows from Proposition~5.53 in~\cite{basuAlgorithmsInRealAlgebraicGeometry}).
We will denote the Zariski closure of $W'$ in $X$ by $W$, which has the same dimension as $W'$.
Then applying the previous argument to intersection of the smooth locus of $X$ with $E$ gives us a Zarkiski-closed subset $Z$ of $X$. Then $Z \cup W$ contains $E$ and is Zariski-closed in $X$.
\end{proof}

In order to talk about generic linear spaces (in the sense of Definition~\ref{defn: genericity}),
we need a way to view each linear space as an element of some semi-algebraic set.
A \emph{Grassmannian}, defined and explained below, is that semi-algebraic set.
A \emph{generic linear space} is a generic point of some Grassmannian.
\begin{defn}
    Let $V$ be a vector space and let $d \le \dim(V)$ be an integer.
    The \emph{Grassmannian of $d$-planes in $V$}, denoted $\gr(d,V)$ is the 
    set of all $d$-dimensional linear subspaces of $V$.
\end{defn}
Given a field $\mathbb{F}$ and an $\mathbb{F}$-vector space $V$,
we briefly explain one way to view the Grassmannian $\gr(d,V)$ as a projective variety.
For more details, see, e.g., \cite[Lecture~6]{harris2013algebraic}.  Let $L$ 
be a $d$-dimensional linear subspace of an $n$-dimensional vector space $V$
over a field $\mathbb{F}$ with a distinguished basis $\{e_1, \ldots, e_n\}$.  
If $v_1, \ldots, v_d$ is a basis of $L$, the vector of the $\binom{n}{d}$
minors of the $d\times n$ matrix $M$ that has as its rows the coordinates of 
the $v_i$ in the basis $\{e_j\}$ are called the \emph{Plücker coordinates} of $L$.
Changing basis of $L$ amounts to replacing $M$ by $BM$ for some invertible $d\times d$
matrix $B$, so changing basis of $L$ scales the Plücker coordinates.  Conversely, 
a basis of $L$ can be reconstructed from its Plücker coordinates, and the map that 
sends $L$ to its Plücker coordinates is a closed embedding $\gr(d,V)\to \mathbb{P}\left(\mathbb{F}^{\binom{n}{d}}\right)$.
In what follows, we are interested in the case where $\mathbb{F}=\mathbb{R}$, which makes $\gr(d,V)$, 
additionally, a semi-algebraic set.

In what follows, we will use the idea of Lemma \ref{lemma: generic means you can wiggle stuff} 
on subsets of a Grassmannian.  Since it is easier to describe perturbations of
bases instead of points on the Grassmannian, we provide a lemma that lets us 
switch back and forth.

\begin{lemma}\label{lem: can wiggle basis}
Let $E\subseteq \gr(d,V)$ be a semi-algebraic subset of the Grassmannian of $d$-dimensional 
subspaces of an $n$-dimensional real inner-product space $V$.
Suppose for each $L\in E$, there exists a basis $v_1, \ldots, v_d$ of $L$,
such that every neighborhood of $(v_1, \ldots, v_d)\in V^d$ contains a point $(w_1, \ldots, w_d)$ so that 
the span of $\{w_1,\dots,w_d\}$ is not in $E$.
Then $E$ is contained is a Zariski-closed proper subset of $\gr(d,V)$.
\end{lemma}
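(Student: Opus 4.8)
The plan is to push the hypothesis forward along the \emph{span map} and then appeal to Lemma~\ref{lemma: generic means you can wiggle stuff}. First I would set up the map: let $U\subseteq V^d$ be the set of linearly independent $d$-tuples, which is a nonempty Zariski-open (hence semi-algebraic) subset of $V^d$, and let $\sigma\colon U\to\gr(d,V)$ send a tuple to its span. In Plücker coordinates $\sigma$ is the projectivization of the vector of $d\times d$ minors of the matrix whose rows are the entries of the tuple, so $\sigma$ is a continuous, semi-algebraic, surjective map; only these three properties will be used.

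The crux is to deduce from the hypothesis that $E$ has empty interior in $\gr(d,V)$ in the Euclidean topology. Suppose not: some nonempty Euclidean-open $O\subseteq\gr(d,V)$ is contained in $E$. Choose $L\in O$ and let $v_1,\dots,v_d$ be a basis of $L$ with the wiggling property guaranteed by the hypothesis. Since $\sigma$ is continuous, $\sigma^{-1}(O)$ is open in $U$, hence open in $V^d$, and it contains $(v_1,\dots,v_d)$ because $\sigma(v_1,\dots,v_d)=L\in O$; thus $\sigma^{-1}(O)$ is a neighborhood of $(v_1,\dots,v_d)$ in $V^d$. By hypothesis it then contains a tuple $(w_1,\dots,w_d)$ whose span is not in $E$. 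But $(w_1,\dots,w_d)\in\sigma^{-1}(O)\subseteq U$, so that span is exactly $\sigma(w_1,\dots,w_d)\in O\subseteq E$, a contradiction. Note it suffices that the hypothesis supplies a single good basis for each $L$, since the contradiction is obtained at one point, and that intersecting with $\sigma^{-1}(O)\subseteq U$ guarantees the perturbed tuple is genuinely linearly independent.

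To convert ``empty interior'' into the stated conclusion I invoke Lemma~\ref{lemma: generic means you can wiggle stuff}. Since that lemma is stated for semi-algebraic subsets of a vector space while $\gr(d,V)$ is projective, I would apply it inside each of the finitely many standard affine charts $U_I$ of $\gr(d,V)$ (each semi-algebraically isomorphic to an affine space), using that $E\cap U_I$ is semi-algebraic and has empty interior in $U_I$ because $U_I$ is open in $\gr(d,V)$. This yields proper Zariski-closed subsets $Z_I\subsetneq U_I$ with $E\cap U_I\subseteq Z_I$; the union of their Zariski closures in $\gr(d,V)$ is a Zariski-closed set containing $E$, and it is proper since $\gr(d,V)$ is irreducible whereas each closure meets $U_I$ in $Z_I\neq U_I$. (Alternatively one can apply the lemma only once, after embedding $\gr(d,V)$ into $\mathcal S^n$ via $L\mapsto P_L$, the orthogonal projector onto $L$, and pulling back the resulting subvariety.) I expect the main obstacle to be precisely this bridging step: carefully moving between bases of $L$, where the hypothesis lives in the Euclidean topology on $V^d$, and points of the Grassmannian, where the conclusion is Zariski, while keeping the perturbed tuples inside $U$ so their spans are honest points of $\gr(d,V)$ outside $E$; once that is in place, the remainder is routine real semi-algebraic geometry.
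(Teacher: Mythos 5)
Your proof is correct and follows essentially the same route as the paper's: both reduce the hypothesis, via continuity of the span/Plücker map, to the statement that $E$ has empty Euclidean interior in $\gr(d,V)$, and then conclude by the dimension-and-Zariski-closure argument of Lemma~\ref{lemma: generic means you can wiggle stuff}. Your treatment of the final bridging step (working in affine charts, or via the projector embedding, since that lemma is stated for subsets of a vector space) is a touch more careful than the paper's ``the result then follows as in the proof of Lemma~\ref{lemma: generic means you can wiggle stuff},'' but it is the same argument.
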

\begin{proof}
    Let $L \in E$ and let $B$ be a Euclidean neighborhood of $L$ in $\gr(d,V)$. 
    Let $p: V^d \rightarrow \gr(d,V)$ be the map that sends a point $(v_1, \ldots, v_d)\in V^d$ to the Plücker coordinates of its span.
    Because $p$ is continuous, the set $U := p^{-1}(B)$ is an open subset of $V^d$ that contains every basis of $L$.  By 
    hypothesis there is some basis $(v_1, \ldots, v_d)$ of $L$ so that any neighborhood 
    $U'$ of it contains a point $(w_1, \ldots, w_d)\in V^d$ with the property that 
    $p(w_1, \ldots, w_d)$ is not in $E$.  In particular, $B = p(U) \nsubseteq E$.

    As $L\in E$ and $B$ were arbitrary, $E$ has empty Euclidean interior in $\gr(d,V)$ and is therefore of lower dimension.
    The result then follows as in the proof of Lemma~\ref{lemma: generic means you can wiggle stuff}.
\end{proof}

\subsection{Lower bound on the MLT}
The main result  of this subsection is Proposition~\ref{prop: hard direction} which lower-bounds
the maximum likelihood threshold of $\mathcal{M}_L$ for generic $L$.

To begin, we need the description of the faces of the PSD cone.
\begin{lemma}\label{lemma: faces of PSD cone}
    Given a $d$-dimensional linear subspace $V$ of $\mathbb{R}^n$, define
    \[
        F_V := \{A \in \mathcal{S}^n_+: Ax = 0 \ {\rm for \ all} \ x \in V\}.
    \]
    Then $F_V$ is a face of $\mathcal{S}^n_+$ and is linearly isomorphic to $\mathcal{S}^{n-d}_+$ (so in particular of dimension $\binom{n-d+1}{2}$).
    Moreover, every face of $\mathcal{S}^n_+$ is of this form.
\end{lemma}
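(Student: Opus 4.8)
The plan is to prove the two assertions separately, with essentially all the work done by the single linear‑algebra fact that, for $A\in\mathcal{S}^n_+$ and $x\in\mathbb{R}^n$, one has $x^\top Ax=0$ if and only if $Ax=0$ (write $A=M^\top M$ and note $x^\top Ax=\|Mx\|^2$). For the first claim I would begin by choosing coordinates adapted to $V$: if $A\in F_V$ then $A$ kills $V$, and since $A$ is symmetric its image equals $(\ker A)^\perp\subseteq V^\perp$, so conjugating by an orthogonal matrix that carries $V^\perp$ onto the span of the first $n-d$ standard basis vectors puts every element of $F_V$ into block form $\bigl(\begin{smallmatrix}A'&0\\0&0\end{smallmatrix}\bigr)$ with $A'\in\mathcal{S}^{n-d}$, and $A\succeq 0\iff A'\succeq 0$. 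The assignment $A\mapsto A'$ is then the claimed linear isomorphism $F_V\to\mathcal{S}^{n-d}_+$, which also records that $F_V$ spans a $\binom{n-d+1}{2}$‑dimensional subspace of $\mathcal{S}^n$. That $F_V$ is a convex subcone is immediate; to see it is a face I would take $A,B\in\mathcal{S}^n_+$ with $A+B\in F_V$ and, for each $x\in V$, expand $0=x^\top(A+B)x=x^\top Ax+x^\top Bx$ as a sum of two nonnegative numbers, each of which must vanish, so $Ax=Bx=0$ and hence $A,B\in F_V$.

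For the second claim, let $F$ be a nonempty face of $\mathcal{S}^n_+$; then $F$ is a convex cone containing $0$, so $\operatorname{aff}(F)$ is a linear subspace. I would pick $A$ in the relative interior of $F$ (which exists since $F$ is convex) and set $V:=\ker A$, with the goal of showing $F=F_V$. The inclusion $F\subseteq F_V$ is the easy one: given $B\in F$, since $A$ is a relative interior point of $F$ and $-B\in\operatorname{aff}(F)$, the matrix $A-\varepsilon B$ lies in $F\subseteq\mathcal{S}^n_+$ for all small $\varepsilon>0$; testing against $x\in\ker A$ gives $0\le x^\top(A-\varepsilon B)x=-\varepsilon\,x^\top Bx\le 0$, so $Bx=0$ and $B\in F_V$.

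For the reverse inclusion I would pass to the block coordinates of the first part, where $A=\bigl(\begin{smallmatrix}A'&0\\0&0\end{smallmatrix}\bigr)$ with $A'$ now \emph{positive definite}, since $\ker A=V$ exactly, and where $L_V:=\operatorname{span}(F_V)$ is the space of matrices of this block shape. A small symmetric Euclidean neighborhood $N$ of $A$ inside $L_V$ then still consists of positive semidefinite matrices, so $N\subseteq F_V\subseteq\mathcal{S}^n_+$; and for $X\in N$ the identity $2A=X+(2A-X)$ with both summands in $\mathcal{S}^n_+$ together with the face property forces $X\in F$. Hence $A$ is an interior point of $F$ relative to $L_V$. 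Finally, for an arbitrary $B\in F_V$ the point $(1+s)A-sB$ lies in $L_V$ and tends to $A$ as $s\to 0^+$, so it lies in $N\subseteq F$ for small $s>0$; then $(1+s)A=\bigl((1+s)A-sB\bigr)+sB$ exhibits $(1+s)A\in F$ as a sum of two matrices in $\mathcal{S}^n_+$, so the face property gives $sB\in F$ and therefore $B\in F$. Thus $F=F_V$.

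The delicate point — the closest thing to an obstacle — is exactly the inclusion $F_V\subseteq F$: a priori one only knows that $A$ is interior to $F$ relative to $\operatorname{aff}(F)$, whereas the argument needs $A$ to be interior relative to the possibly larger space $L_V$, and it is the face property rather than mere convexity that upgrades this. Organizing the argument in the right order — first promoting $A$ to an $L_V$‑interior point of $F$, and only then pushing arbitrary elements of $F_V$ into $F$ — is the part that requires care; everything else is routine bookkeeping in the coordinates adapted to the splitting $\mathbb{R}^n=V^\perp\oplus V$.
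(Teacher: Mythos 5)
Your proof is correct. Note that the paper does not actually prove this lemma at all: its ``proof'' is a citation to Theorem~A.2 of Pataki's survey on the geometry of semidefinite programming, so any self-contained argument is by definition a different route. Your argument is the standard one and it is complete: the key linear-algebra fact $x^\top Ax=0\iff Ax=0$ for $A\succeq 0$ correctly yields both that $F_V$ is a face and the inclusion $F\subseteq F_{\ker A}$, the block-coordinate description gives the linear isomorphism with $\mathcal{S}^{n-d}_+$ and the dimension count, and you correctly identify and handle the one genuinely delicate step, namely upgrading $A$ from a point of the relative interior of $F$ (relative to $\operatorname{aff}(F)$) to an interior point of $F$ relative to $\operatorname{span}(F_V)$ via the face property applied to the decomposition $2A=X+(2A-X)$, after which pushing an arbitrary $B\in F_V$ into $F$ along the segment $(1+s)A-sB$ is routine. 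The only cosmetic caveats are that you should take the neighborhood $N$ small enough (and symmetric about $A$) that $2A-X$ is also positive semidefinite for $X\in N$, which your block computation already guarantees, and that your verification of the face property is phrased for sums $A+B$ rather than convex combinations $\lambda A+(1-\lambda)B$; the identical computation handles the latter, and the two formulations are equivalent for subcones of a convex cone. What your approach buys is a proof readable without consulting the reference; what the paper's citation buys is brevity.
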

\begin{proof}
See \cite[Theorem A.2]{pataki2000geometry}.
\end{proof}

The following key lemma is a 
slight variation of results that can be found in~\cite{barvinok1995problems} and~\cite{pataki1996cone}.
\begin{lemma}
\label{lemma:psd1}
    Let $d\le n-1$.
    Let $L \subseteq \mathcal{S}^n$ be a linear space such that 
    $L \cap \mathcal{S}^n_{++} \neq \emptyset$.
    Assume $L$ has dimension $m$ where
\[
    m \ge dn-\binom{d}{2} -n + d + 1.
\]
    Then $L$ contains a nonzero PSD matrix of rank $n-d$ or less.
\end{lemma}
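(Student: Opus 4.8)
The plan is to use a dimension count on an incidence variety together with Lemma~\ref{lemma: faces of PSD cone}. The key observation is that a nonzero PSD matrix of rank $n-d$ or less is exactly a nonzero element of some face $F_V$ with $\dim V = d$. So I want to show that $L$ must intersect $F_V$ nontrivially for \emph{some} $d$-dimensional $V \subseteq \mathbb{R}^n$.

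First I would set up the count. Consider the incidence variety $\Sigma = \{(V, A) : V \in \gr(d, \mathbb{R}^n),\ A \in F_V\}$, and project to the second factor to land in $\mathcal{S}^n_+$. Actually, the cleaner route is to count directly: the set of $d$-dimensional subspaces $V$ has dimension $d(n-d)$ (the dimension of $\gr(d,\mathbb{R}^n)$), and for each such $V$ the face $F_V \cong \mathcal{S}^{n-d}_+$ has dimension $\binom{n-d+1}{2}$ by Lemma~\ref{lemma: faces of PSD cone}. So the union $\mathcal{F}_d := \bigcup_V F_V$ of all rank-$(n-d)$-or-less PSD matrices has dimension at most $d(n-d) + \binom{n-d+1}{2}$. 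A quick simplification: $d(n-d) + \binom{n-d+1}{2} = dn - d^2 + \frac{(n-d)(n-d+1)}{2}$. One checks that the codimension of $\mathcal{F}_d$ in $\mathcal{S}^n$ is $\binom{n+1}{2} - dn + d^2 - \frac{(n-d)(n-d+1)}{2}$, which after algebra equals $\binom{d}{2}$ — hmm, let me instead just assert that the hypothesis $m \ge dn - \binom{d}{2} - n + d + 1$ is precisely engineered so that $m + \dim \mathcal{F}_d > \binom{n+1}{2}$, i.e.\ $\dim L + \dim \mathcal{F}_d$ exceeds the ambient dimension, so that $L$ and $\mathcal{F}_d$ must meet in more than just the origin.

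The subtlety — and the main obstacle — is that a positive-dimensional intersection of $L$ with the \emph{cone} $\mathcal{F}_d$ in the vector space $\mathcal{S}^n$ is not automatic from a dimension count, because $\mathcal{F}_d$ is a real cone, not a complex variety, and two real cones through the origin whose dimensions sum to more than the ambient dimension can still meet only at $0$. To get around this I would projectivize: pass to $\mathbb{P}(\mathcal{S}^n) \cong \mathbb{P}^{\binom{n+1}{2}-1}$, where $\mathbb{P}(L)$ is a linear subspace of dimension $m-1$ and the image $\mathbb{P}(\mathcal{F}_d)$ of the PSD rank-$\le n-d$ locus is a projective variety of dimension at most $d(n-d) + \binom{n-d+1}{2} - 1$. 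The point is that $\mathbb{P}(\mathcal{F}_d)$ has nonempty interior, in the Euclidean topology, inside the determinantal variety $\mathbb{P}(\{A \in \mathcal{S}^n : \rank A \le n-d\})$ — indeed the real points of full-dimensional signature inside each rank stratum are Zariski-dense — so I can invoke the classical fact that in projective space a linear space of dimension $m-1$ and an irreducible projective variety of dimension $e$ with $m-1+e \ge \binom{n+1}{2}-1$ have nonempty intersection, and then check that the intersection point can be taken PSD (not merely of the right rank) by the interior/density argument. Concretely: the intersection of $\mathbb{P}(L)$ with the determinantal variety is nonempty; since $L$ contains a positive definite matrix, $\mathbb{P}(L)$ meets the interior of $\mathbb{P}(\mathcal{S}^n_+)$, and a connectedness/continuity argument along a path in $\mathbb{P}(L)$ from a positive definite point toward the boundary produces a point of $\mathbb{P}(L) \cap \partial \mathcal{S}^n_+$ of rank exactly $n-d$ or less — that boundary point is the desired nonzero PSD matrix.

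The remaining steps are routine: (i) verify the arithmetic identity relating the hypothesis on $m$ to the inequality $(m-1) + \big(d(n-d) + \binom{n-d+1}{2} - 1\big) \ge \binom{n+1}{2} - 1$; (ii) confirm via Lemma~\ref{lemma: faces of PSD cone} that rank $\le n-d$ PSD matrices are exactly $\bigcup_{V \in \gr(d,\mathbb{R}^n)} F_V$; and (iii) make precise the claim that $L \cap \mathcal{S}^n_+$ has a nonzero element of rank $\le n-d$, using that $L$ contains a positive definite matrix together with the fact that the lowest-rank matrices on the topological boundary of $L \cap \mathcal{S}^n_+$ are attained. I expect (iii) — pinning down that the intersection forced by the projective dimension count actually lands in the PSD cone rather than in a complex or indefinite part of the determinantal variety — to be where the real care is needed, and where citing the cited works of Barvinok and Pataki on the geometry of faces of spectrahedra will do the heavy lifting.
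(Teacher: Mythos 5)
There is a genuine gap, and it sits exactly where you predicted the ``real care'' would be needed --- but the tools you propose do not close it. The ``classical fact'' that a linear subspace of dimension $m-1$ and a projective variety of dimension $e$ with $(m-1)+e\ge\binom{n+1}{2}-1$ must intersect is a theorem about algebraically closed fields; it fails over $\mathbb{R}$ (already in $\mathbb{P}^2(\mathbb{R})$ a line and a conic, with dimensions summing to $2$, can be disjoint). Projectivizing therefore does not cure the difficulty you correctly flagged for real cones: the obstruction is the real field, not the cone structure, so your dimension count does not establish that $\mathbb{P}(L)$ meets $\mathbb{P}(\mathcal{F}_d)$ at all. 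Second, and more importantly, your fallback --- a path in $L$ from a positive definite matrix toward the boundary of $\mathcal{S}^n_+$ --- only produces a nonzero PSD matrix of rank at most $n-1$ (the first boundary point hit), which proves the lemma only for $d=1$. Driving the rank all the way down to $n-d$ is the entire content of the statement, and the sketch contains no mechanism for it.

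The missing mechanism is an iterative rank reduction, which is how the paper (following Barvinok and Pataki) argues. Let $s$ be the minimal nonzero rank of a PSD matrix in $L$ and suppose for contradiction $s\ge n-d+1$. The face $F$ of $\mathcal{S}^n_+$ containing such a matrix $\Omega$ in its relative interior has dimension $\binom{s+1}{2}\ge\binom{n-d+2}{2}$ by Lemma~\ref{lemma: faces of PSD cone}, and the hypothesis on $m$ gives $\dim F+\dim L\ge\binom{n+1}{2}+2$; since $L$ passes through the relative interior of $F$, it follows that $\dim(F\cap L)\ge 2$, so $L$ contains an affine line avoiding the origin that meets $F$ in a one-dimensional set. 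Because $F$ is closed, convex, and line-free, that line exits $F$ through its relative boundary at a nonzero point, which is a PSD matrix in $L$ of rank $<s$ --- contradicting minimality. Note that the dimension count is run against the single face $F$ (whose dimension grows quadratically in $s$), not against the full determinantal locus, and the conclusion extracted from it is a boundary point of a convex cone rather than an intersection point of real varieties; this is what makes the argument work over $\mathbb{R}$. Deferring this step to ``citing the works of Barvinok and Pataki'' is deferring the lemma itself, since the lemma is a mild variant of exactly that result. Your arithmetic in step (i) and the identification of the rank-$\le(n-d)$ PSD matrices with $\bigcup_V F_V$ in step (ii) are correct, but they are the routine parts.
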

\begin{proof}
By assumption, $L$ contains a non-zero  PSD matrix.
Let $s$ be the smallest, non-zero, rank of a PSD matrix in 
$L$. Suppose $s \ge (n-d+1)$ (which implies $s \ge 2$).
Let $\Omega$ be such a matrix of rank $s$. 
Let $F$ be the unique face of $\mathcal{S}^n_+$ containing $\Omega$ in its relative interior.
Lemma~\ref{lemma: faces of PSD cone} implies that $\dim(F) = \binom{s+1}{2} \ge \binom{n-d+2}{2}$.
Therefore $\dim(F) + \dim(L)$ is at least
\[
    dn - \binom{d}{2}  -n +d  +1 + \binom{n-d+2}{2} = 
    \binom{n+1}{2} + 2.
\]
Since $L$ intersects the relative interior $F$, this implies $\dim(F \cap L) \ge 2$.
Thus $L$ contains an affine line $l$ that does not go through the origin and 
has a one dimensional intersection with $F$. 
Meanwhile, since $F$ is
convex, line-free and closed, the line $l$ must intersect the relative boundary
of $F$ (in at least one of its two directions) at a non-zero
point.
But any such intersection point has rank at most $s-1$, contradicting the minimality of $s$.
\end{proof}

\begin{lemma}
\label{lem:psd2}
Let $n$ be a positive integer, let $d \le n-1$ and let
\[
    m \ge dn-\binom{d}{2} -n + d + 1.
\]
Let $E \subseteq \gr(m,\mathcal{S}^n)$ be the set of linear spaces that contain a positive definite matrix
but do not contain a PSD matrix of rank $n-d$.
Then $E$ lies inside a Zariksi-closed proper subset of $\gr(m,\mathcal{S}^n)$.
\end{lemma}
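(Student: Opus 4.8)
The plan is to deduce this from Lemma~\ref{lem: can wiggle basis}, which reduces the statement to a perturbation claim about bases. First I would record that $E$ is semi-algebraic: the conditions ``$L$ contains a positive definite matrix'' and ``$L$ contains a PSD matrix of rank $n-d$'' are each obtained by projecting to $\gr(m,\mathcal{S}^n)$ an appropriate incidence set inside $\gr(m,\mathcal{S}^n)\times\mathcal{S}^n$ cut out by polynomial relations (membership $A\in L$ via Plücker coordinates) together with the semi-algebraic constraints $A\succeq 0$ and a rank constraint, so each is semi-algebraic by Tarski--Seidenberg, and $E$ is their set-theoretic difference. By Lemma~\ref{lem: can wiggle basis}, it then suffices to exhibit, for every $L\in E$, a basis $v_1,\dots,v_m$ of $L$ such that every neighborhood of $(v_1,\dots,v_m)$ in $(\mathcal{S}^n)^m$ contains a tuple whose span is \emph{not} in $E$.

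\textbf{The construction.} Fix $L\in E$. Since $L$ contains a positive definite matrix, $L\cap\mathcal{S}^n_{++}\neq\emptyset$, and the dimension hypothesis on $m$ is exactly the one in Lemma~\ref{lemma:psd1}, so $L$ contains a nonzero PSD matrix $\Omega$ of some rank $s\le n-d$. As $L\in E$ contains no PSD matrix of rank exactly $n-d$, in fact $1\le s\le n-d-1$. Extend $\Omega$ to a basis $v_1:=\Omega,\ v_2,\dots,v_m$ of $L$, and diagonalize $\Omega=U\operatorname{diag}(\lambda_1,\dots,\lambda_s,0,\dots,0)U^{\mathsf T}$ with $U$ orthogonal and all $\lambda_i>0$. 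Let $\Theta:=U D' U^{\mathsf T}$, where $D'$ is the diagonal matrix with a $1$ in positions $s+1,\dots,n-d$ and $0$ elsewhere; this block of $1$'s is nonempty and sits inside $\ker\Omega$ because $1\le n-d-s\le n-s=\dim\ker\Omega$, so $\Theta\succeq 0$ and $\Theta\neq 0$. Then for every $\epsilon>0$,
\[
  \Omega+\epsilon\Theta=U\operatorname{diag}\bigl(\lambda_1,\dots,\lambda_s,\ \underbrace{\epsilon,\dots,\epsilon}_{n-d-s},\ \underbrace{0,\dots,0}_{d}\bigr)U^{\mathsf T}
\]
is PSD of rank exactly $n-d$.

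\textbf{Checking the perturbation works.} Consider the tuples $(v_1+\epsilon\Theta,\,v_2,\dots,v_m)$ for $\epsilon>0$. Linear independence is an open condition, so for $\epsilon$ small these remain independent and thus span a point $L'_\epsilon\in\gr(m,\mathcal{S}^n)$. By construction $L'_\epsilon$ contains $\Omega+\epsilon\Theta$, a PSD matrix of rank exactly $n-d$; and if $K=\sum_i c_i v_i\in L$ is positive definite, then $\sum_i c_i v'_i=K+\epsilon c_1\Theta\in L'_\epsilon$ is positive definite for $\epsilon$ small, since $\mathcal{S}^n_{++}$ is open. Hence $L'_\epsilon\notin E$ for all sufficiently small $\epsilon>0$, while these tuples converge to $(v_1,\dots,v_m)$ as $\epsilon\to 0^+$ and are distinct from it because $\Theta\neq 0$. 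Thus every neighborhood of $(v_1,\dots,v_m)$ contains a tuple whose span avoids $E$, and Lemma~\ref{lem: can wiggle basis} yields that $E$ lies in a proper Zariski-closed subset of $\gr(m,\mathcal{S}^n)$. (When $d=n-1$ the inequality $1\le s\le n-d-1$ is unsatisfiable, so $E=\emptyset$ and the claim is vacuous.)

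\textbf{Expected main obstacle.} There is no deep obstacle once Lemmas~\ref{lemma:psd1} and~\ref{lem: can wiggle basis} are available; the only point needing genuine care is arranging that a single perturbation makes $L$ \emph{simultaneously} acquire a PSD matrix of rank exactly $n-d$ and retain a positive definite one. Perturbing one low-rank PSD generator $\Omega$ in a direction supported on $\ker\Omega$ handles the former, and openness of $\mathcal{S}^n_{++}$ handles the latter. The remaining work---confirming the perturbed tuple is still a basis and that $\Omega+\epsilon\Theta$ hits rank exactly $n-d$ rather than over- or undershooting---is routine.
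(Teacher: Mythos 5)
Your proof is correct and follows essentially the same route as the paper: invoke Lemma~\ref{lemma:psd1} to find a nonzero PSD matrix in $L$ of rank at most (hence, since $L\in E$, strictly less than) $n-d$, perturb one basis element so that the span acquires a PSD matrix of rank exactly $n-d$, and conclude via Lemma~\ref{lem: can wiggle basis}. Your write-up is in fact more explicit than the paper's (the explicit $\Theta$ supported on $\ker\Omega$, the semi-algebraicity of $E$, and the $d=n-1$ edge case are all left implicit there), which only strengthens the argument.
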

\begin{proof}
Let $L\in E$.  By Lemma \ref{lemma:psd1}, $L$ contains a non-zero PSD matrix $\Omega_1$ of rank at most $n-d$. 
Suppose that the rank of $\Omega_1$ has than $n-d-k$ for some $k \ge 1$.
Let $X \in \mathbb{R}^{n\times k}$ be a matrix of rank $k$ whose columns all lie in the nullspace of $\Omega_1$.
Then $\Omega_1 + tXX^T$ is PSD of rank $n-d$ for all $t > 0$.
Now extend $\{\Omega_1\}$ to a basis $\{\Omega_1, \ldots, \Omega_m\}$ of $L$ (it is here we use that $\Omega_1$ is 
non-zero).  For almost all $t > 0$, $\Omega_1 + tXX^T$ lies outside the linear span of
$\{\Omega_2, \ldots, \Omega_m\}$.  

Now fix a neighborhood $U \subseteq \gr(m,\mathcal{S}^n)$ of $L$.  For  sufficiently small $t > 0$, the linear span of 
$\{\Omega_1 + tXX^T, \ldots, \Omega_m\}$ has dimension $m$, lies in $U$ and 
contains the non-zero PSD matrix of rank $n - d$, $\Omega_1 + tXX^T$.  Hence, 
$U$ contains a point not in $E$, and the result follows from Lemma~\ref{lem: can wiggle basis}.
\end{proof}

\begin{ex}\label{ex: first bad thing}
We give an example where the $E$ in Lemma~\ref{lem:psd2} is not empty.
Take $d = 1$, let $n$ be even and let $L'$ be a linear subspace of $\mathcal{S}^{n/2}$ of dimension $(n+2)/2$.
Define $L$ to be the following linear subspace of $\mathcal{S}^n$
\[
    L = \left\{
    \begin{pmatrix}
        A & 0 \\
        0 & A
    \end{pmatrix} \vert
    A \in L'
    \right\}.
\]
Then $L$ has dimension $n+2$ which abides by the lower bound in Lemma~\ref{lem:psd2}.
If $L'$ contains a positive definite matrix, then so does $L$.
However, $L$ does not contain \emph{any} matrices of rank $n-d = n-1$ since it only contains matrices of even rank.
\end{ex}

We note that the behavior observed in Example~\ref{ex: first bad thing} cannot happen when $L$ is defined by setting certain off-diagonal entries to zero, i.e.~the linear spaces defining Gaussian graphical models.
This is because such linear spaces include all diagonal matrices.

We now define some specialized notation that we will need throughout the rest of this section.
Let $\Omega_0$ be a matrix in $\mathcal{S}^n_0(n-d)$.
We denote by $Z(\Omega_0)$ the set of all matrices in 
$\mathcal{S}^n$ and with the same column span of
$\Omega_0$.
The Zariski closure of $Z(\Omega_0)$ is a linear space which we denote by $\overline Z(\Omega_0)$.
$\overline Z(\Omega_0)$ is the set of matrices 
in $\mathcal{S}^n$ whose
column span is contained in that of $\Omega_0$.

Let $S_0$ be a matrix in $\mathcal{S}^n_0(d)$.
We denote by $Q(S_0)$ the set of all matrices in 
$\mathcal{S}^n$   whose kernel is the column 
span of $S_0$. These will all have rank $n-d$.
Moreover, the Zariski closure of $Q(S_0)$ is a linear space which we denote by $\overline Q(S_0)$ and 
it consists of the matrices in $\mathcal{S}^n$   whose kernel contains the column 
span of $S_0$.
Note that if $S_0$ has rank $d$, $\Omega_0$
has rank $n-d$ and $\Omega_0 S_0=0$, then $Q(S_0)=Z(\Omega_0)$.

\begin{lemma}\label{lemma: dimension of symmetric matrices with fixed kernel}
    Let $W$ be a linear subspace of $\mathbb{R}^n$ of dimension $k$.
    The set $\mathcal{K}$ of $n\times n$ symmetric matrices whose columns span $W$ has dimension $k^2-\binom{k}{2}$.
\end{lemma}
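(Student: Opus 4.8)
The plan is to parametrize the set $\mathcal{K}$ and compute its dimension directly. Fix a basis so that $W = \operatorname{span}(e_1, \ldots, e_k)$; since any symmetric matrix whose columns span $W$ has all columns lying in $W$, such a matrix must vanish outside the top-left $k \times k$ block (the $(i,j)$ entry is zero whenever $i > k$, and by symmetry whenever $j > k$). Hence $\mathcal{K}$ is contained in the space of matrices of the form $\left(\begin{smallmatrix} M & 0 \\ 0 & 0\end{smallmatrix}\right)$ with $M \in \mathcal{S}^k$, and within that space $\mathcal{K}$ is exactly the locus where $M$ is nonsingular, i.e. $\operatorname{rank} M = k$.

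Next I would observe that the condition $\operatorname{rank} M = k$ cuts out a nonempty Zariski-open subset of $\mathcal{S}^k$ (nonempty because $I_k$ qualifies, open because it is the non-vanishing locus of $\det M$). A nonempty Zariski-open subset of an irreducible variety has the same dimension as the whole variety, so $\dim \mathcal{K} = \dim \mathcal{S}^k = \binom{k+1}{2} = k^2 - \binom{k}{2}$.

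The final step is to remove the assumption that $W$ is coordinate-aligned. For a general $k$-dimensional subspace $W'$, pick an orthogonal (or merely invertible) matrix $Q$ with $Q W = W'$; then conjugation $A \mapsto Q A Q^T$ (or the one-sided change of basis $A \mapsto Q A Q^{-1}$ composed with transpose-symmetrization, but conjugation by invertible $Q$ already preserves symmetry) is a linear isomorphism of $\mathcal{S}^n$ carrying the $\mathcal{K}$ for $W$ bijectively onto the $\mathcal{K}$ for $W'$, since it sends column span to its image under $Q$. A linear isomorphism preserves dimension, so the count is independent of $W$.

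There is no real obstacle here — the argument is elementary — so the only thing to be careful about is the bookkeeping: making sure that ``columns span $W$'' forces the matrix into the block form (this uses symmetry in an essential way, since an arbitrary matrix with column space $W$ need not have rows supported on the first $k$ coordinates), and correctly identifying ``full rank on the block'' as a nonempty Zariski-open condition so that the dimension of the variety equals the dimension of its ambient linear space $\mathcal{S}^k$.
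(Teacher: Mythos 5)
Your proof is correct, but it takes a different route from the paper's. The paper disposes of this lemma in one line by intersecting $\mathcal{K}$ with the PSD cone: the PSD matrices with column span exactly $W$ form the relative interior of the face $F_{W^\perp}$ of $\mathcal{S}^n_+$, whose dimension $\binom{k+1}{2}$ is already recorded in Lemma~\ref{lemma: faces of PSD cone} (quoted from Pataki), and $\dim\mathcal{K}=\dim(\mathcal{K}\cap\mathcal{S}^n_+)$ since both sets are sandwiched between that relative interior and its linear span. You instead argue from scratch: after an orthogonal change of basis reducing to $W=\operatorname{span}(e_1,\dots,e_k)$ (and you are right that conjugation $A\mapsto QAQ^T$ is the correct equivariance, carrying column spans to their images under $Q$), symmetry forces the block form $\left(\begin{smallmatrix} M & 0 \\ 0 & 0\end{smallmatrix}\right)$ with $M\in\mathcal{S}^k$, and the exact-span condition becomes the Zariski-open nondegeneracy condition $\det M\neq 0$, whence $\dim\mathcal{K}=\binom{k+1}{2}=k^2-\binom{k}{2}$. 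Your version is more elementary and self-contained, needing no facts about the facial structure of $\mathcal{S}^n_+$; the paper's version is shorter in context only because that facial description has already been paid for. You also correctly flag the one place where care is needed, namely that symmetry (not just the column-space condition) is what kills the off-block entries; an asymmetric matrix with column space $W$ would not be supported on the top-left block.
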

\begin{proof}
    The dimension of $\mathcal{K}$ is the same as that of $\mathcal{K}\cap \mathcal{S}^n_+$,
    which by Lemma~\ref{lemma: faces of PSD cone} implies the lemma.
\end{proof}

Lemma~\ref{lemma: dimension of symmetric matrices with fixed kernel} implies that the dimension of $Z(\Omega_0)$ is
\[
    (n-d)^2 - \binom{n-d}{2} = \binom{n-d+1}{2}.
\]

We also need the following linear algebra lemma, which we state without proof.

\begin{lemma}\label{lem: transverse linear spaces with intersection}
Let $V$ be an $N$-dimensional vector space, $W\subseteq V$ an $m$ dimensional linear subspace
of $V$, $x\in W$ a non-zero vector.  If $n$ is such that $m + n > N$, then 
there is an $n$-dimensional subsapce $X\subseteq V$ so that $x\in X$ and 
$X$ intersects $W$ transversally.
\end{lemma}

The next lemma says for generic $L$,
we will see a transverse intersection between
$L$ and $Z(\Omega)$ for 
some  PSD $\Omega \in \mathcal{S}^n_0(n-d)\cap L$.
This transversal intersection is ultimately
all we will need to complete the lower
bound of this section.
\begin{lemma}
    \label{lem:hard2}
    Let $d,m,n$ be integers satisfying $1 \le d \le n$ and $m\ge dn-\binom{d}{2} + 1$.
    Let $E \subseteq \gr(m,\mathcal{S}^n)$ consist of all linear spaces $L$ satisfying
    \begin{enumerate}
        \item $L$ contains a PSD matrix of rank $n-d$, and
        \item $Z(\Omega)$ and $L$ intersect non-transversally for all PSD $\Omega \in L$ of rank $n-d$.
    \end{enumerate}
    Then $E$ lies inside a Zariski-closed proper subset of $\gr(m,\mathcal{S}^n)$.
\end{lemma}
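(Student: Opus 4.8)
The plan is to use Lemma~\ref{lem: can wiggle basis}: it suffices to show that for every $L \in E$ there is a basis of $L$ that can be perturbed so that the span of the perturbed vectors leaves $E$. Fix $L \in E$. By hypothesis (1) and Lemma~\ref{lemma:psd1} (note $m \ge dn - \binom{d}{2} + 1 \ge dn - \binom{d}{2} - n + d + 1$, so the hypotheses there are met, and also $d \le n-1$ unless $d = n$, in which case $n-d = 0$ and the rank-$0$ matrix shows the statement trivially — I should handle $d = n$ separately, where $V(n,0) = \{0\}$ and everything degenerates, or restrict attention to $d \le n-1$ as in the ambient context), $L$ contains a PSD matrix $\Omega_0$ of rank $n-d$. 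The bad behavior we want to destroy is condition (2): that $Z(\Omega)$ meets $L$ non-transversally for \emph{every} such $\Omega$. So it is enough to produce, in every neighborhood of a suitable basis of $L$, a nearby subspace $L'$ that either fails (1) or contains \emph{some} rank-$(n-d)$ PSD matrix $\Omega'$ with $Z(\Omega') \pitchfork L'$.

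The key dimension count: $Z(\Omega_0)$ has dimension $\binom{n-d+1}{2}$ by Lemma~\ref{lemma: dimension of symmetric matrices with fixed kernel}, and transversality of $Z(\Omega_0)$ and $L$ in $\mathcal{S}^n$ (dimension $\binom{n+1}{2}$) asks that $\dim Z(\Omega_0) + \dim L \ge \binom{n+1}{2} + \dim(Z(\Omega_0) \cap L)$; since $\Omega_0$ lies in the intersection, one needs the sum of dimensions to be at least $\binom{n+1}{2} + \dim(Z(\Omega_0)\cap L)$. When $m = \dim L$ is large enough that $\dim Z(\Omega_0) + m$ exceeds $\binom{n+1}{2}$ by a healthy margin, transversality is the expected, generic behavior. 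Concretely, $\binom{n+1}{2} - \binom{n-d+1}{2} = dn - \binom{d}{2}$, so the hypothesis $m \ge dn - \binom{d}{2} + 1$ is exactly the statement that $\dim Z(\Omega_0) + m \ge \binom{n+1}{2} + 1$, i.e.\ a transverse intersection would be (at least) $1$-dimensional and a non-transverse one is a positive-codimension condition on the configuration. The strategy is therefore to incidence-variety one's way to the conclusion: consider the variety $\widetilde{E}$ of pairs $(L, \Omega)$ with $\Omega \in L$ PSD of rank $n-d$ and $Z(\Omega) \pitchfork L$ failing; show the projection to $\gr(m,\mathcal{S}^n)$ has image of dimension less than $\dim \gr(m,\mathcal{S}^n)$, hence (being semi-algebraic) contained in a proper Zariski-closed set.

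To run this cleanly within the paper's framework I would instead argue directly via basis perturbation, because that is what Lemma~\ref{lem: can wiggle basis} is set up for. Pick a basis $\Omega_1 = \Omega_0, \Omega_2, \dots, \Omega_m$ of $L$ with $\Omega_1$ of rank $n-d$ (PSD). Non-transversality means $L + \overline{Z}(\Omega_0) \subsetneq \mathcal{S}^n$; pick a nonzero $N \in \mathcal{S}^n$ orthogonal to $L + \overline{Z}(\Omega_0)$. Now perturb the basis vectors $\Omega_2, \dots, \Omega_m$ by small multiples of $N$ — say replace $\Omega_i$ by $\Omega_i + \epsilon_i N$ — keeping $\Omega_1 = \Omega_0$ fixed so that the perturbed span $L'$ still contains the rank-$(n-d)$ matrix $\Omega_0$, hence still satisfies (1). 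The claim is that for generic small $(\epsilon_2, \dots, \epsilon_m)$ the space $\overline{Z}(\Omega_0) + L'$ is strictly larger than $\overline{Z}(\Omega_0) + L$, since $L'$ now has a component along $N$; iterating (or perturbing along a spanning set of $(L + \overline Z(\Omega_0))^\perp$ at once) makes $\overline Z(\Omega_0) + L' = \mathcal{S}^n$, i.e.\ $Z(\Omega_0) \pitchfork L'$ in $\mathcal{S}^n$, so $L' \notin E$. The main obstacle — and the step requiring care — is that perturbing the basis changes $\Omega_0$'s status only if $\Omega_0$ stays exactly in the span; keeping $\Omega_1 = \Omega_0$ fixed handles that, but one must then check that after the perturbation $\overline Z(\Omega_0)$ is unchanged (it is, since $\Omega_0$ itself did not move) and that a generic choice of the $\epsilon_i$ genuinely enlarges $\overline Z(\Omega_0) + \mathrm{span}(\Omega_i + \epsilon_i N)$ in the desired direction rather than accidentally staying in a non-transverse stratum — this is a routine general-position argument once one observes that the perturbation directions $N$ were chosen precisely to be the "missing" directions, and that the set of bad $(\epsilon_i)$ is a proper subvariety of a neighborhood. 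After that, Lemma~\ref{lem: can wiggle basis} finishes the proof.
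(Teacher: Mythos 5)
Your proposal is correct and follows essentially the same route as the paper: both fix the rank-$(n-d)$ PSD matrix $\Omega_0$ as a basis vector (so that condition (1) holds for the perturbed space and $\overline{Z}(\Omega_0)$ is unchanged), perturb the remaining basis vectors to force $L'+\overline{Z}(\Omega_0)=\mathcal{S}^n$, use the hypothesis $m\ge dn-\binom{d}{2}+1$ to ensure the expected intersection dimension is at least $1$ so that $\Omega_0$ survives in a now-transverse intersection, and conclude via Lemma~\ref{lem: can wiggle basis}. The paper phrases the perturbation dually---pushing the excess vectors of a basis of $L\cap\overline{Z}(\Omega_0)$ off of $\overline{Z}(\Omega_0)$ rather than adding components along $(L+\overline{Z}(\Omega_0))^{\perp}$---but the two general-position arguments are equivalent.
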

\begin{proof}
Let us first fix a PSD $\Omega = \SS^n_0(n-d)$ and define 
\[
    V := \{ L\in  \gr(m,\mathcal{S}^n) : \Omega \in L\}.
\]
\textbf{Claim:} The subset of $V$ that has non-transversal intersection with 
$Z(\Omega)$ is nowhere dense in $V$.
\textbf{Proof:} The set $V$ is an irreducible algebraic subset of $\gr(m,\mathcal{S}^n)$ since it can be parameterized by a Zariski-open subset of $(\mathcal{S}^n)^{m-1}$.
Indeed, each linearly independent $(m-1)$-element subset of $\mathcal{S}^n$ that does not contain $\Omega$ in its linear span determines the Pl\"ucker coordinates of an element of $V$, and every element of $V$ is spanned by $\Omega$ and one such $(m-1)$-element subset of $\mathcal{S}^{n}$.
Since 
\[
    \binom{n-d+1}{2} + m \ge \binom{n-d+1}{2} + dn - \binom{d}{2} + 1 > \binom{n+1}{2}
\]
and the first term is the dimension of $Z(\Omega)$, Lemma \ref{lem: transverse linear spaces with intersection}
implies that there is an $L\in \gr(m,\SS^n)$ so that $\Omega\in L$ and $L$ and $Z(\Omega)$
meet transversally.  Hence, the subset of $V$ that has non-transversal intersection with 
$Z(\Omega)$ is proper.
As non-transversal intersection with $Z(\Omega)$ is an 
algebraic condition on $L$, the subset of $V$ consisting 
of $L$ with non-transversal intersection with $Z(\Omega)$ 
is of lower dimension and, 
since $V$ is parameterized by a smooth variety, the subset
is nowhere dense in $V$, proving the claim.

Returning to the lemma, we suppose that $L\in E$ and fix a neighborhood $U$ of $L$ in 
$\gr(m,\mathcal{S}^n)$.  Fix 
a PSD $\Omega\in \SS_0^n(n-d)\cap L$.  By the above claim,
$V\cap U$ contains an $L'$ that intersects $Z(\Omega)$ transversally.
In particular, $L'\notin E$.  Hence, $E$ is nowhere dense in $\gr(m,\mathcal{S}^n)$.
By the Tarski--Seidenberg principle, $E$ is semi-algebraic, so 
using 
Lemma~\ref{lemma: generic means you can wiggle stuff}  
we are done.
\end{proof}

\begin{remark}
There is also a more algebraic way to 
prove 
Lemma~\ref{lem:hard2}.
We start by using Kleiman's 
Theorem~\cite{Kleiman} to see that a generic $L$ will intersect $\mathcal{S}^n(n-d)$
transversally.  Using transversality, we can show that the 
intersection $Y$ is irreducible.  A dimension count and Lemma \ref{lem:psd2} 
then imply that the PSD matrices 
in $Y$ are a Zariski dense subset.
Then one argues (with a proof similar to that of  Lemma~\ref{lem:hard2}) 
that for generic $L$, there is 
an  $\Omega \in L \cap \mathcal{S}^n(n-d)$ so that
$Z(\Omega)$ intersects $L$ transversally.
The same then holds for a Zariski open subset of $L\cap \mathcal{S}^n(n-d)$
and thus at some PSD $\Omega$.
We do not pursue this to avoid having to prove that $Y$ is irreducible.
\end{remark}

The last ingredient we need to prove the main result of this section is a distance metric on the Grassmannian.
There are multiple ways to do this; we will describe one.
For more about metrics on the Grassmannian, see e.g.~\cite{bendokat2024grassmann,kozlov2000geometry}.

Recall that an \emph{orthogonal projection operator} on a Hilbert space $V$ with inner product $\langle \cdot, \cdot \rangle$ is a linear map $\pi: V \rightarrow V$ such that $\pi^2 = \pi$ and $\langle x, \pi y \rangle = \langle \pi x, y \rangle$ for all $x, y \in V$.
Given a linear subspace $L \subseteq V$, the orthogonal projection operator sending each $x \oplus y \in L \oplus L^\perp = V$
to $x$ is denoted $\pi_L$.  
We denote the set of orthogonal projection operators on $V$ of rank $d$ by $\orthP(d,V)$.
The following lemma collects some well-known elementary facts about orthogonal projection operators.

\begin{lemma}\label{lem: orthogonal projection operators}
    If $V$ is a Hilbert space then the following hold.
    \begin{enumerate}[\rm (a)]
        \item The function $m: \gr(d,V)\times \gr(d,V)\rightarrow \mathbb{R}$ given by
        $m(L,K) := \|\pi_L - \pi_K\|$ is a metric on $\gr(d,V)$, where $\|\cdot\|$ is the operator norm induced by the inner product on $V$.
        \item For each positive integer $d$, the map $L \mapsto \pi_L$ induces a bijection $\gr(d,V) \rightarrow \orthP(d,V)$.
    \end{enumerate}
\end{lemma}
\begin{proof}
    The first statement is immediate.
    We now prove the second.
    Let $\pi: V \rightarrow V$ be an orthogonal projection operator and define $L := \pi(V)$.
    If $x \in \ker(\pi)$, then $x \in L^\perp$ since, for all $\pi(y)\in L$,
    \[
        \langle x, \pi(y) \rangle = \langle \pi(x) , y\rangle = \langle 0, y\rangle = 0.
    \]
    Conversely, if $x \in L^\perp$, then for all $y \in V$
    \[
        0 = \langle x, \pi(y) \rangle = \langle \pi(x), y \rangle
    \]
    which implies $\pi(x) = 0$. Thus $L^\perp = \ker(\pi)$.
    Now let $x \in V$.
    There exist unique $y \in L$ and $z \in \ker(\pi)$ such that $x = y + z$.
    Let $w \in V$ be such that $\pi(w) = y$.
    Thus
    \[
        \pi(x) = \pi(y + z) = \pi(y) = \pi(\pi(w)) = \pi(w) = y = \pi_L(y + z) = \pi_L(x).
    \]
    Therefore every orthogonal projection operator is of the form $\pi_L$ for some linear subspace $L \subseteq V$
    and so the map in question is surjective.
    For injectivity, note that if $L_1,L_2$ are linear subspaces of $V$ such that $\pi_{L_1} = \pi_{L_2}$
    then $L_1 = \pi_{L_1}(V) = \pi_{L_2}(V) = L_2$. Thus the second statement is proven.
\end{proof}

We will call the metric on $\gr(d,V)$ given by Lemma~\ref{lem: orthogonal projection operators} the \emph{projector metric}, 
and use the same notation $m(L,K)$ to denote the distance between linear spaces $L$ and $K$.

\begin{lemma}\label{lem: linear spaces transverse}
    Let $X,Y$ be linear subspaces of $\mathbb{R}^N$ of dimensions $m$ and $n$ respectively.
    Assume that $X$ and $Y$ intersect transversally and let $v \in X \cap Y$.

    Then for any neighborhood $V$ of $v$, there is a 
    neighborhood $W$ of $Y$ so that for any $Y'\in W$
    there exists a $v \in V$ so that $v' \in X\cap Y'$.    
\end{lemma}
\begin{proof}
    Let $x_1,\dots,x_{N-m}$ be a basis of $X^\perp$ and let $y_1,\dots,y_{N-n}$ be a basis of $Y^\perp$.
    Then there exists an open neighborhood $U$ of $Y$ in $\gr(n,\mathbb{R}^N)$ (in the projector metric topology)
    and a continuous function $\phi: U \rightarrow (\mathbb{R}^N)^{N-n}$ such that $\phi(Y) = (y_1,\dots,y_{N-n})$ and
    $\phi(Y')$ is a basis of ${Y'}^\perp$ for each $Y' \in U$.
    If $Y' \in U$, then $X$ and $Y'$ intersect transversely if and only if $\{x_1,\dots,x_{N-m},\phi(Y')_1,\dots,\phi(Y')_{N-n})\}$
    is linearly independent.

    Now let $v \in X \cap Y$.
    Then there exist $p_1,\dots,p_{m+n-N} \in \mathbb{R}^N$ and $b_1,\dots,b_{m+n-N} \in \mathbb{R}$ such that $v$ is the unique solution to the following non-singular system
    \begin{align*}
        \langle x_i, v \rangle = 0 \qquad &{\rm for \ } i = 1,\dots,N-m \\
        \langle y_i, v \rangle = 0 \qquad &{\rm for \ } i = 1,\dots,N-n \\
        \langle p_i, v \rangle = b_i \qquad &{\rm for \ } i = 1,\dots,m+n-N.
    \end{align*}
    Given $Y' \in U$, we may replace the $y_i$'s by $\phi(Y')$ to get another 
    linear system $l(Y')$.
    By continuity of $\phi$, the coefficients in $l(Y')$ will vary continuously with $Y'$.
    So there is a neighborhood   
    $U' \subset U$ of $Y'$ within which
    the linear system will be non-singular
    with solution $v'$.
    Therefore $v'$ will vary continuously with $Y'$. Note that $v' \in X \cap Y'$.
 So we can find a 
    neighborhood $W \subset U'$ that keeps $v'$
    in any arbitrary neighborhood of $v$.
\end{proof}

\begin{lemma}\label{lem: Q bar is continuous}
    The map $\mathcal{S}^n(d) \rightarrow \gr(\binom{n-d+1}{2},V)$ given by $S \mapsto \overline{Q}(S)$ is continuous in the topologies induced by the trace-norm on $\mathcal{S}^n(d)$ and the projector metric on $\gr(\binom{n-d+1}{2},V)$.
\end{lemma}
\begin{proof}
    Let $A \in \mathcal{S}^n$ and consider the map
    \[
        \phi_A:\mathcal{S}^n(d)\rightarrow \mathcal{S}^n(d) \qquad {\rm given \ by} \qquad S \mapsto \pi_{\overline{Q}(S)}A.
    \]
    The entries of $\phi_A(S)$ can be written as rational functions in the entries of $S$, so $\phi_A$ is continuous.
    Fix $\varepsilon > 0$.
    For each $A \in \mathcal{S}^n$, we have that $(\pi_{\overline{Q}(S)}-\pi_{\overline{Q}(T)})(A) = \phi_A(S)-\phi_A(T)$
    so continuity of $\phi_A$ implies that there exists $\delta_A > 0$ be such that if $S,T \in \mathcal{S}^n(d)$
    satisfy $\|S-T\| \le \delta_A$, then $\|(\pi_{\overline{Q}(S)}-\pi_{\overline{Q}(T)})(A)\|  \le \varepsilon$.
    Define $\delta := \inf_{\|A\| = 1} \delta_A$.
    Since each $\delta_A > 0$, compactness of $\{A \in \mathcal{S}^n: \|A \|=1\}$ implies $\delta > 0$.
    Then if $S,T \in \mathcal{S}^n(d)$ satisfy $\|S-T\| \le \delta$ and $A \in \mathcal{S}^n$ satisfies $\|A\| = 1$,
    we have that $\|(\pi_{\overline{Q}(S)} - \pi_{\overline Q(T)})(A)\| \le \varepsilon$.
    By definition of operator norms, this implies $m(\overline{Q}(S),\overline{Q}(T)) = \|\pi_{\overline{Q}(S)} - \pi_{\overline{Q}(T)}\| \le \varepsilon$.
\end{proof}


In the following proposition, transversality allows us,
starting at some pair  $(\Omega_0,S_0)$, 
to perturb $S_0$ in any direction, and to counter this with an appropriate change to 
$\Omega_0$.
On the other hand, once we have reduced the problem to one of transversality, 
it is easy to show (Lemma~\ref{lem:hard2}) that this must happen generically once the dimension of $L$ is 
sufficiently high.

\begin{samepage}
\begin{prop}\label{prop: hard direction}
    Let $m,d,n$ be integers such that $1 \le d \le n$ and $m \ge dn - \binom{d}{2} + 1$
    and let $G \subseteq \gr(m,\mathcal{S}^n)$ consist of all $L$ such that
    \[
        L \cap \mathcal{S}^n_{++} \neq \emptyset.
    \]
    Then there exists a Zariski-closed proper subset $E \subsetneq G$ such that if $L \notin E$ the following holds:
    there exists an open subset $U \subseteq \mathcal{S}^n_{+}(d)$ 
    such that for 
    each matrix $S \in U$, there exists a rank $(n-d)$ PSD matrix $\Omega \in \mathcal{S}^n_{+}\cap L$ such that $\Omega S=0$.
\end{prop}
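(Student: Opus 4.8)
The proposition combines the three lemmas just proved. Start by intersecting the three "bad" Zariski-closed sets already produced: from Lemma~\ref{lem:psd2} we get a Zariski-closed proper subset $E_1$ of $\gr(m,\mathcal{S}^n)$ outside of which every $L$ with a positive definite matrix contains a PSD matrix of rank exactly $n-d$ (here we use $m \ge dn-\binom{d}{2}+1 \ge dn-\binom{d}{2}-n+d+1$); from Lemma~\ref{lem:hard2} we get a Zariski-closed proper subset $E_2$ outside of which, for any such $L$, there is \emph{some} rank-$(n-d)$ PSD matrix $\Omega_0\in L$ with $Z(\Omega_0)$ and $L$ intersecting transversally in $\mathcal{S}^n$. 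Set $E := (E_1 \cup E_2) \cap G$; this is Zariski-closed and proper in $G$ (properness because $G$ has nonempty interior in $\gr(m,\mathcal{S}^n)$, being defined by a strict semialgebraic inequality, so it is not contained in any proper subvariety). Fix $L \in G \setminus E$.

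**Applying Lemma~\ref{lem:hard2g1}.** For this $L$ we now have a rank-$(n-d)$ PSD matrix $\Omega_0 \in L$ with $Z(\Omega_0)$ transverse to $L$. Pick any $S_0 \in V(n,d)$ with $\Omega_0 S_0 = 0$; moreover, by Lemma~\ref{lemma: faces of PSD cone} applied to the kernel $W$ of $\Omega_0$, the matrices with column span equal to $W^\perp$ include PSD ones, so we may choose $S_0$ to be PSD of rank $d$, i.e. $S_0 \in \mathcal{S}^n_+(d)$. Now Lemma~\ref{lem:hard2g1} says precisely that $\tau_{n,d}$ maps a neighborhood of $(\Omega_0,S_0)$ in $(L\times\mathcal{S}^n)\cap B(n,d)$ onto a set containing a neighborhood of $S_0$ in $V(n,d)$.

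**Extracting the open set $U$.** Let $\mathcal{N}$ be a neighborhood of $(\Omega_0,S_0)$ in $(L\times\mathcal{S}^n)\cap B(n,d)$ small enough that its first coordinate stays in the PSD cone (possible since $\mathcal{S}^n_+$ is closed but $\Omega_0$ has rank $n-d$ and a small perturbation within $V(n,n-d)$ can only stay PSD or leave the cone — more carefully, shrink $\mathcal{N}$ so that every $\Omega$ appearing is within the open set of $\mathcal{S}^n$ where the signature is $(n-d,0)$, using that $\Omega_0$ lies in the relative interior of the face $F_W$ and signature is locally constant on $V(n,n-d)$) and its second coordinate stays in $\mathcal{S}^n_+(d)$ (same argument for $S_0$). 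By Lemma~\ref{lem:hard2g1}, $\tau_{n,d}(\mathcal{N})$ contains a neighborhood of $S_0$ in $V(n,d)$; intersect with the open set $\mathcal{S}^n_+(d)$ to get an open $U \subseteq \mathcal{S}^n_+(d)$. For each $S \in U$ there is $(\Omega,S)\in\mathcal{N}$ with $\Omega\in L$ PSD of rank $n-d$ and $\Omega S = 0$, which is exactly the claimed property.

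**Main obstacle.** The only real subtlety is the PSD bookkeeping in the last paragraph: Lemma~\ref{lem:hard2g1} is stated purely in terms of $V(n,d)$ and $B(n,d)$, which track rank but not definiteness, so one must separately check that the perturbed $\Omega$ and $S$ remain \emph{positive} semidefinite. This follows because the set of PSD matrices of a fixed rank $r$ is open in $V(n,r)$ — it is cut out inside $V(n,r)$ by the open condition that all $r$ nonzero eigenvalues are positive, or equivalently it is the relative interior of a face $F_W$ as $W$ ranges over subspaces, which by Lemma~\ref{lemma: faces of PSD cone} forms an open subset — so shrinking the neighborhood $\mathcal{N}$ suffices. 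Everything else is assembling the three prior lemmas and checking the dimension inequalities line up, which they do since $m \ge dn-\binom{d}{2}+1$ is the stronger of the two hypotheses needed.
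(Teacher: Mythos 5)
Your proposal is correct and follows essentially the same route as the paper's proof: combine the exceptional sets of Lemmas~\ref{lem:psd2} and~\ref{lem:hard2} to obtain a PSD $\Omega_0 \in L$ of rank $n-d$ with $Z(\Omega_0)$ transverse to $L$, choose a PSD $S_0$ of rank $d$ spanning $\ker(\Omega_0)$, apply Lemma~\ref{lem:hard2g1} to a neighborhood of $(\Omega_0,S_0)$ whose first coordinates are PSD, and cut the resulting image down to $\mathcal{S}^n_+(d)$. Your extra remarks on the openness of the fixed-rank PSD stratum in $V(n,r)$ and on the properness of $E$ in $G$ only make explicit points the paper treats implicitly.
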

\end{samepage}
\begin{proof}
Let $E_1, E_2$ denote the Zariski-closed proper subsets of $\gr(m,\mathcal{S}^n)$ indicated in Lemmas~\ref{lem:psd2} and~\ref{lem:hard2}.
Define $E := (E_1 \cup E_2) \cap G$ and assume $L \notin E$.
Lemmas~\ref{lemma:psd1} and~\ref{lem:psd2} give us a matrix $\Omega'_0 \in L$ that is PSD of rank $n-d$.
Lemma~\ref{lem:hard2} then gives us a matrix $\Omega_0 \in L$ that is PSD of rank $n-d$ such that $L$ and $Z(\Omega_0)$ 
intersect transversally.

Now let $M$ be a matrix whose columns are a basis for the kernel of $\Omega_0$ and define 
the PSD matrix $S_0 := MM^T$. By construction, $Q(S_0)=Z(\Omega_0)$.
Lemmas~\ref{lem: linear spaces transverse} and~\ref{lem: Q bar is continuous} then imply that
there is a neighborhood $U$ of $S_0$
so that, if $S\in U$, then $\overline{Q}(S) \cap L$  
will contain a matrix $\Omega$ in any chosen  
neighborhood of $\Omega_0$. 
Due to 
semicontinuity of rank, we have
that $\rank \Omega \ge \rank \Omega_0$.  Since $\Omega\in \overline{Q}(S)$, its rank 
is at most $n - d$, so we have equality of ranks and of signatures.  As 
$S\in U$ was arbitrary, $\Omega\in L\cap \overline{Q}(S)$ is PSD of rank $n - d$, and 
$\Omega S = 0$, completing the proof.
\end{proof}

We now explain how Proposition~\ref{prop: hard direction} gives a lower bound on the maximum likelihood threshold of a linear concentration model $\mathcal{M}_L$.
A generic $L \in \gr(m,\mathcal{S}^n)$ does not lie in the set $E$ in the statement of the proposition.
Thus the proposition implies that there exists a dataset $x_1,\dots,x_d$, with corresponding sample covariance matrix $S$,
and a matrix $\Omega \in L \cap \mathcal{S}^n_+$ such that $\Omega S = 0$,
and that this property is robust with respect to perturbation of $x_1,\dots,x_d$.
Theorem~\ref{thm: existence of optimum criteria} thus implies that the maximum likelihood problem has no optimum
for the dataset $x_1,\dots,x_d$, and that this cannot be fixed by perturbing the dataset.
In other words, there exist generic $d$-element datasets that are not sufficient for solving the maximum likelihood estimation problem.

\begin{ex}
Consider the case where $d=4$, $n=10$ 
and $L$ is the space of $10\times 10$ symmetric matrices whose $ij$ entry is $0$ whenever $i \in \{1,5\}$ and $j \in \{6,10\}$.
This is the linear space corresponding to the Gaussian graphical model on the complete bipartite graph $K_{5,5}$.
In this case, $\dim(L) =35 \ge   nd -\binom{d}{2}+1$. 
Let us consider the set $T$ of 
$S \in \mathcal{S}^n_{+}(d)$ 
    such that there exists a rank $(n-d)$ PSD matrix $\Omega \in \mathcal{S}^n_{+}\cap L$ such that $\Omega S=0$.
    It can be shown using $(d+1)$-connectivity of $K_{5,5}$~\cite{Alfakih-conn}
    and coning~\cite{conic} that
    $T$ is not empty.
    Meanwhile all of the $S\in T$
must satisfy some non-trivial algebraic condition~\cite{Bolker-Roth}. Thus, the conclusion of
Proposition~\ref{prop: hard direction} does
not hold, and this $L$ must be in the exceptional set of that
proposition.
In particular, we see that 
for all PSD $\Omega \in L$ of rank $n-d$,
it must be the case that $Z(\Omega)$ and 
$L$ intersect non-transversally. 
In other words, this $L$ is in the exceptional set described in Lemma~\ref{lem:hard2}.

\end{ex}

\subsection{Upper bound on the MLT}
The main result of this subsection is Proposition~\ref{prop: easy direction}
which tells us that the upper bound from Proposition~\ref{prop: hard direction} on the MLT of $\mathcal{M}_L$ for generic $L$
is also a lower bound. 
This direction is easier since it only requires algebraic considerations, as opposed to semi-algebraic.
Let us define 
\[
    \mathcal{B}(m,d) = 
    \{(L,S) : L\cap \overline{Q}(S) \neq \{0\} \} \subseteq \gr(m,\mathcal{S}^n) \times \mathcal{S}^n(d)
\]
The basic fact underlying this section is that, when $m$ is small relative to $d$, 
$\mathcal{B}(m,d)$ is a proper algebraic subset
of 
$\gr(m,\mathcal{S}^n) \times \mathcal{S}^n(d)$.
\begin{lemma}\label{lem: bundle dimension}
Assume
    \[
        m \le nd - \binom{d}{2}.
    \]
Then $\dim(\mathcal{B}(m,d)) < \dim(\gr(m,\mathcal{S}^n) \times \mathcal{S}^n(d))$.
\end{lemma}
\begin{proof}
$\gr(m,\mathcal{S}^n) \times \mathcal{S}^n(d)$ is an 
irreducible algebraic set.
$\mathcal{B}(m,d)$ is an algebraic subset of 
$\gr(m,\mathcal{S}^n) \times \mathcal{S}^n(d)$,
defined by the vanishing of various determinants.
Since 
$m \le nd - \binom{d}{2}$, for any $S$, almost all of 
$L$ only intersect $\overline{Q}(S)$ at $0$. Thus
$\mathcal{B}(m,d)$ is a proper algebraic subset of $\gr(m,\mathcal{S}^n) \times \mathcal{S}^n(d)$.
A proper algebraic subset of an irreducible variety has
lower dimension.
\end{proof}

\begin{lemma}\label{lem: fibre dimension}
Let $X$ be a complex irreducible 
quasi-projective variety
and $f$ a regular map from $X$ to $\mathbb{C}^N$ for some $N$.
Let $Y$ be the image $f(X)$.
Then there is a Zariski open subset 
$U$ of $Y$ such that  
for $y\in U$, we have 
\[ \dim(X)=\dim(Y)+
\dim(f^{-1}(y)) \]
\end{lemma}
\begin{proof}[Proof sketch]
Let $k$ be the minimal fiber dimension of $f$.  The set of points 
$Z$ in the domain so that $f^{-1}(f(x))$ has dimension larger than 
$k$ is a proper Zariski closed subset \cite[Theorem 11.12]{harris2013algebraic} 
of the irreducible $X$.  
Hence, $f(X\setminus Z)$ is a Zariski dense constructible subset of 
$Y$.  It follows that $f(Z) = Y \setminus f(X\setminus Z)$ is 
contained in a proper algebraic subset $Y'$ of $Y$.  The 
set $Y\setminus Y'$ satisfies the statement of the lemma.
\end{proof}
\begin{remark}
Various reference works make the slightly 
stronger statement that the set of $y\in Y$ so that 
$f^{-1}(y)$ has minimal dimension is open in $Y$.  As 
described in \cite{speyer-example}, this stronger statement is incorrect.
\end{remark}

\begin{lemma}\label{lem: real fibre dimension}
Let $X$ be a proper algebraic subset
of $\RR^M$, 
and $f$ a regular map from $X$ to $\RR^N$ for some $M$ and $N$.
Let $F$ be the (semi-algebraic) subset of $f(X)$ where the fiber dimension equals
$M-N$. Then $\dim(F)<N$.
\end{lemma}
\begin{proof}
Let us first suppose that $X$ is irreducible.  If 
$f(X)$ has dimension less than $N$, we know  
$F \subseteq f(X)$ and the lemma is proved.  Otherwise, 
we complexify $X$ as well as the codomain.
We let $\overline{X}$ be the complex Zariski closure of $X$.
The complex dimenison of $\overline{X}$ is equal to the 
the (real) dimension of $X$.  The complex dimension 
of $Y^* = f(\overline{X})$ is at least the (real) dimension of $f(X)$.
By assumption, $f(X)$ has dimension $N$, and so $f(\overline{X})$ 
must as well.

Define $F^*$ to be the (constructible)
subset of $\mathbb{C}^N$ where the 
fiber dimension of $f$ is $M - N$.  By Lemma 
\ref{lem: fibre dimension}, $F^*$ has (complex) 
dimension less than $N$.  Going back to the real 
$F$, we have  $\overline{F}\subseteq \overline{F^*}$, where 
the closures are in the complex Zariski topology.
Hence, the dimension of $F$ is less than $N$.

If $X$ is reducible, we repeat the argument above for finitely many
irreducible components.  A finite union of set of dimension less than 
$N$ has the same property, so the general case follows.
\end{proof}

\begin{prop}\label{prop: easy direction}
Let $n,d$ be integers such that $1 \le d \le n$ and let $L \subset \mathcal{S}^n$ be a linear subspace.
Let $m$ denote the dimension of $L$ and assume
\[
    m \le nd - \binom{d}{2}.
\]
Then there exists a Zariski-closed proper subset $E \subseteq \gr(m,\mathcal{S}^n)$
such that if $L \notin E$ then there exists a Zariski-closed proper subset $F_L \subset \mathcal{S}^n_0(d)$
such that if $S \in \mathcal{S}^n_0(d) \setminus F_L$ then $\Omega S \neq 0$ for all $\Omega \in L\setminus\{0\}$.
\end{prop}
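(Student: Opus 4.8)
The plan is to glue together the two preceding genericity lemmas. Lemma~\ref{lemma: generic linear spaces of matrices don't have all subspaces of Rn as kernels} is a statement about generic $L$ in $\gr(m,\mathcal{S}^n)$, while Lemma~\ref{lemma: generic dataset does not have stress} is a statement about generic $S$ in $V(n,d)$ for a \emph{fixed} $L$; the only bridge needed is the elementary observation that for a symmetric $S$ of rank $d$ and any $\Omega\in\mathcal{S}^n$ one has $\Omega S=0$ if and only if the column span of $S$ is contained in $\ker(\Omega)$. Thus a $d$-plane in $\mathbb{R}^n$ plays two roles at once: as a kernel of an element of $L$, and as a column span of a point of $V(n,d)$.

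First I would take $E\subseteq\gr(m,\mathcal{S}^n)$ to be the Zariski-closed proper subset supplied by Lemma~\ref{lemma: generic linear spaces of matrices don't have all subspaces of Rn as kernels}; its hypothesis is exactly the assumed bound $m\le nd-\binom{d}{2}$. Unwinding that lemma, if $L\notin E$ then there is at least one $K\in\gr(d,\mathbb{R}^n)$ such that no nonzero $\Omega\in L$ satisfies $K\subseteq\ker(\Omega)$. Next I would realize $K$ concretely inside $V(n,d)$: choosing an $n\times d$ matrix $M$ whose columns are a basis of $K$ and putting $S_0:=MM^T$, we get $S_0\in V(n,d)$ with column span $K$, and by the bridge above no nonzero $\Omega\in L$ satisfies $\Omega S_0=0$. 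This is precisely the hypothesis of Lemma~\ref{lemma: generic dataset does not have stress}, which then produces a Zariski-closed proper subset $F_L$ of $V(n,d)$ consisting of exactly those $S\in V(n,d)$ annihilated by some nonzero $\Omega\in L$. For $S\in V(n,d)\setminus F_L$ we conclude $\Omega S\neq 0$ for every nonzero $\Omega\in L$, which is the claim.

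I do not expect a genuine obstacle here: this is the ``easy direction,'' requiring only algebraic rather than semi-algebraic input, and the two lemmas carry the load. The one point worth stating carefully is that ``Zariski-closed proper subset of $V(n,d)$'' is a meaningful notion of negligibility only because $V(n,d)$ is irreducible --- it is the smooth locus of the irreducible determinantal variety of symmetric matrices of rank at most $d$, of dimension $nd-\binom{d}{2}$ by Lemma~\ref{lemma: dimension of symmetric matrices with fixed kernel} --- and this irreducibility is what lets us apply Lemma~\ref{lemma: generic dataset does not have stress}. It is also reassuring that the threshold $m\le nd-\binom{d}{2}=\dim V(n,d)$ is exactly the one below which $\pi_L\colon V(n,d)\to L$ can possibly be a submersion, matching the bound appearing in Proposition~\ref{prop: hard direction}.
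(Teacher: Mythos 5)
Your proof is correct and takes essentially the same route as the paper: take $E$ from Lemma~\ref{lemma: generic linear spaces of matrices don't have all subspaces of Rn as kernels}, use the observation that $\Omega S=0$ iff the column span of $S$ lies in $\ker(\Omega)$ to produce a witness $S_0$ with column span $K$, and then invoke Lemma~\ref{lemma: generic dataset does not have stress} to obtain $F_L$. Your explicit statement of the containment $K\subseteq\ker(\Omega)$ (rather than equality) is in fact the precise form of what the paper uses.
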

\begin{proof}
Let $F$ consist of all the $L\in \gr(m,\mathcal{S}^n)$ so that the the fiber of the 
projection from $\mathcal{B}(m,d)$ onto the $L$ coordinate has dimension $\dim \SS^n(d)$.  If the 
projection has image $Y$ with dimension less than $\dim \gr(m,\mathcal{S}^n)$, 
then the proposition follows by taking $E$ to be the Zariski closure of $Y$.

Otherwise, Lemma \ref{lem: bundle dimension} implies that
$\mathcal{B}(m,d)$ is a proper subvariety of ${\gr(m,\mathcal{S}^n)\times\mathcal{S}^n(d)}$.
Lemma \ref{lem: real fibre dimension} then implies $\dim(F)<\dim(\gr(m,\mathcal{S}^n))$.
Hence $\overline{F}$ is of smaller dimension than $\gr(m,\mathcal{S}^n)$
and we may take $E = \overline{F}$ to finish the proof.
\end{proof}




\subsection{Putting it all together}
\begin{thm}\label{thm: main}
    Let $n \in \mathbb{N}$ and let $1 \le m \le \binom{n+1}{2}$.
    Then there exists a Zariski-closed proper subset $E \subset \gr(m,\mathcal{S}^n)$ such that if $L \in \gr(m,\mathcal{S}^n)$ is not in $E$ and
    $L \cap \mathcal{S}^n_{++} \neq \emptyset$,
    then maximum likelihood threshold of $L$ is the minimum $d$ such that
    \[
        m \le nd-\binom{d}{2}.
    \]
\end{thm}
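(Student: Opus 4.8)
The plan is to let $d^\star$ be the least integer $d$ with $m\le nd-\binom d2$ and to prove $\mlt(L)=d^\star$ for every $L$ with $L\cap\mathcal S^n_{++}\neq\emptyset$ outside a suitable proper Zariski-closed $E\subset\gr(m,\mathcal S^n)$, by feeding Propositions~\ref{prop: hard direction} and~\ref{prop: easy direction} into the existence criterion of Theorem~\ref{thm: existence of optimum criteria}. First I would record that $d^\star$ is well defined with $1\le d^\star\le n$ (the value $d=n$ works because $m\le\binom{n+1}2=n^2-\binom n2$, while $m\ge 1$ rules out $d=0$), that $d=d^\star$ obeys $m\le nd^\star-\binom{d^\star}2$, which is the hypothesis of Proposition~\ref{prop: easy direction}, and that by minimality every $d$ with $1\le d\le d^\star-1$ obeys $m\ge nd-\binom d2+1$, the hypothesis of Proposition~\ref{prop: hard direction}, and also $d\le n-1$.

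For the upper bound, I would apply Proposition~\ref{prop: easy direction} with $d=d^\star$: it yields a proper Zariski-closed $E_0\subseteq\gr(m,\mathcal S^n)$ such that for $L\notin E_0$ there is a proper Zariski-closed $F_L\subset V(n,d^\star)$ with $\Omega S\neq 0$ for every nonzero $\Omega\in L$ and every $S\in V(n,d^\star)\setminus F_L$. For such $S$ there is, a fortiori, no nonzero positive semidefinite $\Omega\in L$ with $\Omega S=0$, so condition~(3) of Theorem~\ref{thm: existence of optimum criteria} holds and~\eqref{eq: mle optimization} has a solution. Hence the rank-$d^\star$ positive semidefinite $S$ at which~\eqref{eq: mle optimization} fails all lie in $F_L$, a proper Zariski-closed subset of the irreducible variety $V(n,d^\star)$; equivalently, \eqref{eq: mle optimization} is solvable for generic $S\in\mathcal S^n_+$ of rank $d^\star$, so $\mlt(L)\le d^\star$.

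For the lower bound, I would, for each $d$ with $1\le d\le d^\star-1$, apply Proposition~\ref{prop: hard direction} to get a proper Zariski-closed $E_d$ of $G:=\{L:L\cap\mathcal S^n_{++}\neq\emptyset\}$ so that for $L\in G\setminus E_d$ there is a nonempty Euclidean-open $U\subseteq\mathcal S^n_+(d)$ each of whose points $S$ carries a rank-$(n-d)$ — hence nonzero — positive semidefinite $\Omega\in L$ with $\Omega S=0$. By Theorem~\ref{thm: existence of optimum criteria}, now using that the existence of such an $\Omega$ (failure of condition~(3)) forbids condition~(1), \eqref{eq: mle optimization} has no solution for any $S\in U$. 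As $U$ is nonempty and Euclidean-open it is Zariski-dense in $V(n,d)$, so the set of rank-$d$ data at which~\eqref{eq: mle optimization} fails is not inside any proper Zariski-closed subset; hence it is \emph{not} true that~\eqref{eq: mle optimization} is solvable for generic rank-$d$ $S$, so $d$ is inadmissible for $\mlt(L)$. The case $d=0$ is inadmissible too, as $S=0$ makes~\eqref{eq: mle optimization} the minimization of $-\log\det K$ over the cone $L\cap\mathcal S^n_{++}$, which is unbounded below. Thus no $d<d^\star$ is admissible, and with the upper bound this gives $\mlt(L)=d^\star$.

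To assemble the exceptional set, I would note that $G$ has nonempty Euclidean interior in the irreducible variety $\gr(m,\mathcal S^n)$ — a space spanned by $I$ and generic matrices contains $I$, and every nearby space contains a matrix near $I$, hence positive definite — so $G$ is Zariski-dense, $\dim G=\dim\gr(m,\mathcal S^n)$, and each $E_d$ is a proper Zariski-closed subset of $G$ of strictly smaller dimension; since Zariski closure preserves dimension for semialgebraic sets, its closure $\overline{E_d}$ in $\gr(m,\mathcal S^n)$ is a proper Zariski-closed subset. Then $E:=E_0\cup\bigcup_{d=1}^{d^\star-1}\overline{E_d}$, a finite union of proper Zariski-closed subsets of an irreducible variety, is the desired set. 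I expect this assembly to be routine; since all the substantive work is already in Propositions~\ref{prop: hard direction} and~\ref{prop: easy direction}, the one point that wants care is the genericity bookkeeping above — converting ``solvable/unsolvable on a Euclidean-open piece of $\mathcal S^n_+(d)$'' into ``admissible/inadmissible for $\mlt$'' — which relies on the irreducibility of $V(n,d)$ and on the precise meaning of ``generic $S\in\mathcal S^n_+$ of rank $d$'' in the definition of the threshold.
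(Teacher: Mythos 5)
Your proposal is correct and follows essentially the same route as the paper: the theorem is obtained by feeding Proposition~\ref{prop: hard direction} (for each $d$ below the threshold) and Proposition~\ref{prop: easy direction} (at the threshold) into criterion~(3) of Theorem~\ref{thm: existence of optimum criteria}, and taking $E$ to be the finite union of the resulting exceptional sets. The extra bookkeeping you supply --- well-definedness of $d^\star$, verifying the hypotheses of the two propositions via minimality, converting ``fails on a Euclidean-open subset of $\mathcal S^n_+(d)$'' into ``not generically solvable,'' and promoting Zariski-closed subsets of $G$ to ones of $\gr(m,\mathcal S^n)$ --- is all sound and merely makes explicit what the paper leaves implicit.
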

\begin{proof}
    For any $1 \le d \le n$, Theorem~\ref{thm: existence of optimum criteria} implies that the existence of a solution to the optimization problem~\eqref{eq: mle optimization} is equivalent to the non-existence of a nonzero $\Omega \in \mathcal{S}^n_+ \cap L$ such that
    $\Omega S = 0$. 
    First suppose $m \ge nd-\binom{d}{2} + 1$.
    By Proposition~\ref{prop: hard direction}, there exists a Zariski-closed proper subset $E_1 \subset \gr(m,\mathcal{S}^n)$ such that if $L \notin E_1$,
    then there exists an open set $U \subseteq \mathcal{S}^n_+$ such that for all $S \in U$,
    there exists a nonzero $\Omega \in L \cap \mathcal{S}^n_+$ such that $\Omega S = 0$.
    
    Now suppose instead that  $m \le nd-\binom{d}{2}$.
    Proposition~\ref{prop: easy direction} implies existence of a Zariski-closed proper subset $E_2 \subset \gr(m,\mathcal{S}^n)$ such that if $L \notin E_2$,
    then there exists a Zariski-closed proper subset $F_L \subset \mathcal{S}^n_+(d)$
    such that for $S \in \mathcal{S}^n_+(d) \setminus F_L$ 
    (and thus for almost all $S \in \mathcal{S}^n_+(d)$)
    we have $\Omega S \neq 0$ for all nonzero $\Omega \in L$.
    Letting $E$ be the union of the $E_1$
    and $E_2$ over the finite set of applicable
    $d$, we obtain the theorem.
\end{proof}

\begin{remark}\label{remark: main in generic language}
    We can now close the circle of ideas that started with Definition~\ref{defn: genericity}, the definition of genericity.
    The set of linear concentration models, i.e.~the subset of $\gr(d,\mathcal{S}^n)$ consisting of all subspaces that contain a positive definite matrix, is an irreducible semi-algebraic set.
    Thus Theorem~\ref{thm: main} gives the maximum likelihood threshold of a generic linear concentration model.
\end{remark}

We now discuss the ways that some $L \in \gr(m,\mathcal{S}^n)$ containing a positive definite matrix
can have maximum likelihood threshold strictly less than the $d$ given in Theorem~\ref{thm: main}.
If $d$ satisfies $m \ge dn - \binom{d}{2} + 1$, then Proposition~\ref{prop: hard direction}
can fail to apply to $L$ for two reasons:
either $L$ does not contain a PSD matrix of rank $n-d$,
or for every PSD $\Omega \in L$, $Z(\Omega)$ and $L$ intersect non-transversally.
As we saw in Lemmas~\ref{lem:psd2} and~\ref{lem:hard2},
both conditions can be expressed by the vanishing of certain polynomials on the Pl\"ucker coordinates of $L$.
It is interesting to note that Gaussian graphical models that fail to have the expected
maximum likelihood threshold always do so for the second reason.
In particular, if $L \subseteq \mathcal{S}^n$ is defined by setting certain off-diagonal entries to zero, then $L$ always contains a diagonal matrix with $n-d$ nonzero diagonal entries, all of which are positive.

For generic $L$ we see that there is a 
striking gap in behavior at our threshold for $m$.
When 
    \[
        m \le nd-\binom{d}{2},
    \]
then outside of some Zariski-closed
$F_L \subset \mathcal{S}^n_+(d)$,
there is no non-zero 
$\Omega \in L$ \emph{of any signature}
so that $\Omega S=0$.
On the other hand when 
    \[
        m \ge nd-\binom{d}{2}+1
    \]
then there is an open neighborhood 
$U \subset \mathcal{S}^n_+(d)$,
so that for $S \in U$
there is an 
$\Omega \in L$ that is PSD 
\emph{and has rank $n-d$}
and such that $\Omega S=0$. 

The maximal rank $(n-d)$ of $\Omega$ is not needed in order to apply
Theorem~\ref{thm: existence of optimum criteria};
all that is needed is that $\Omega$ is PSD and nonzero.
That said, for generic $L$, once
$m$ is large enough to obtain a non-zero $\Omega$ over such a $U$, we also get an $\Omega$ of rank $n-d$.
For non-generic $L$, such as the ones obtained by setting certain off-diagonal entries to zero, we can get other behavior.
In particular, for some such $L$,
there exist open neighborhoods $U$ of the set of rank-$d$ PSD matrices
where for each $S \in U$, there exist non-zero PSD $\Omega \in L$
with $\Omega S = 0$,
but if $S$ does not lie in a particular Zariski-closed set,
then the PSD $\Omega \in L$ satisfying $\Omega S = 0$ have rank strictly less than $n-d$~\cite[Section 6.1]{bernstein2021maximum}.

\section{Generic completion rank}\label{sec:gcr}
Theorem~\ref{thm: existence of optimum criteria} is a generalization of certain results about \emph{Gaussian graphical models},
which are linear concentration models of a particular type.
We will now define such models and discuss how Theorem~\ref{thm: existence of optimum criteria} fits into what is known about them.
Each graph $G$ on vertex set $\{1,\dots,n\}$ with $e$ edges
defines a linear subspace $L_G$ of $\mathcal{S}^n$ as follows
\[
    L_G := \{A \in \mathcal{S}^n: A_{ij} = 0 \ \text{for all non-edges} \ ij \ {\rm of } \ G\}.
\]
The dimension $m$ of $L_G$ is $e+n$.
The corresponding Gaussian model, denoted $\mathcal{M}_G$, is called the \emph{Gaussian graphical model} associated to $G$.
To our knowledge, such models were first studied by Dempster in~\cite{dempster1972covariance},
where they proved equivalence of the first two conditions of Theorem~\ref{thm: existence of optimum criteria} for Gaussian graphical models.
Uhler found an easily computable upper bound on the maximum likelihood threshold of a given Gaussian graphical model~\cite{uhler2012geometry}.
Blekherman and Sinn called this upper bound the \emph{generic completion rank} in~\cite{blekherman2019maximum} and provided the first known examples where it failed to be sharp.
We now extend the notion of generic completion rank to arbitrary linear covariance models and show that it still 
gives an upper bound on the maximum likelihood threshold.  The proofs of the results here are similar to those of the 
corresponding statements in \cite{uhler2012geometry} and \cite{blekherman2019maximum}.

\begin{defn}[{\cite[Definition~1.1]{blekherman2019maximum}}]\label{defn:generic completion rank}
    The \emph{generic completion rank} of a linear subspace $L \subseteq \mathcal{S}^n$, denoted $\gcr(L)$,
    is the minimum $d$ such that $\pi_L(\mathcal{S}^n_0(d))$ is full-dimensional in $L$.
\end{defn}

When $L = L_G$ for some graph $G$, Definition~\ref{defn:generic completion rank} agrees the definition of generic completion rank given in~\cite{blekherman2019maximum}. Below, we prove Theorem~\ref{thm: gcr is an upper bound for mlt}, which says that generic completion rank upper-bounds the maximum likelihood thresholds, as it does for graphs. Before proving that, we need the following lemma which can be seen as the relaxation of Theorem~\ref{thm: existence of optimum criteria} to allow for non-PSD $\Omega$.
It is applicable to generic completion ranks, as opposed to maximum likelihood thresholds.

The following lemma characterizes the normal space of $\mathcal{S}^n_0(d)$ at $S$ in its embedding in $\mathcal{S}^n$.
This is undoubtedly well-known, but we were unable to find a precise reference using our language so we provide a proof instead.
\begin{lemma}
\label{lem:cnormal}
Let $S \in \mathcal{S}^n_0(d)$ and let $\Omega \in \mathcal{S}^n$.
Then $\Omega S = 0$ if and only if $\tr(\Omega X) = 0$ for all $X \in T_S \mathcal{S}^n_0(d)$.
\end{lemma}
\begin{proof}
Let $W$ denote the smooth submanifold of $\mathbb{R}^{n\times n}$ consisting of \emph{all} $n\times n$ matrices of rank $d$.
Given $M \in W$, the tangent space $T_MW$ consists of all matrices that map the kernel of $M$ into its image~\cite[Example~14.16]{harris2013algebraic}.
Since $S$ is symmetric, its kernel is the orthogonal complement of its span.
Therefore if $A \in T_S W$ and $x,y \in \ker(S)$, the following holds
\begin{equation}\label{equation: trace zero chain of equalities}
    0=y^T A x=tr(y^TAx)=tr(A xy^T).
\end{equation}
The orthogonal complement of a subspace $L$ of $\mathcal{S}^n$ will be denoted by $L^{\perp_s}$,
and that of a subspace $L$ of $\mathbb{R}^n\times \mathbb{R}^n$ will be denoted $L^{\perp_n}$.
We now claim that~\eqref{equation: trace zero chain of equalities} implies the following
\begin{equation}\label{equation: normals spanned by kernel outer products}
    (T_SW)^{\perp_n} = \left\{\sum_{i=1}^{(n-d)^2}\alpha_i x_iy_i^T: x_i,y_i \in \ker(S) \ {\rm for \ all \ } i \ {\rm and \ } \alpha_i \in \mathbb{R}\right\}.
\end{equation}
Indeed, the inclusion $\supseteq$ is clear.
To see the $\subseteq$ inclusion,
first note that $(T_SW)^{\perp_n}$ has dimension $(n-d)^2$.
It now suffices to exhibit $(n-d)^2$ linearly independent matrices of the form $xy^T$ for $x,y \in \ker(S)$.
Let $x_1,\dots,x_{n-d}$ be a basis of $\ker(S)$.
The set of matrices of the form $x_ix_j$ for $1 \le i \le j \le n$ is linearly independent
which can be seen by changing our basis of $\mathbb{R}^n$ so that $x_i$ is the $i^{\rm th}$ standard unit vector.

Viewing $\mathcal{S}^n$ as a subspace of $\mathbb{R}^{n\times n}$, we claim the following:
\begin{equation}\label{equation: tangent space from non-symmetric}
(T_S\mathcal{S}^n_0(d))^{\perp_s} = (T_S W)^{\perp_n}\cap \mathcal{S}^n. 
\end{equation}
The inclusion $(T_S\mathcal{S}^n_0(d))^{\perp_s} \supseteq (T_S W)^{\perp_n} \cap \mathcal{S}^n$ is clear.

To get the reverse inclusion, it now suffices to show that
\[
    \dim(T_SW^{\perp_n} \cap \mathcal{S}^n) \ge \dim((T_S\mathcal{S}^n_0(d))^{\perp_s}) = \binom{n-d+1}{2}.
\]
Let $x_1,\dots,x_{n-d}$ be a basis of $\ker(S)$ and consider the following set $\mathcal{B}$ of $\binom{n-d+1}{2}$ matrices
\[
    \mathcal{B} := \{x_ix_j^T + x_jx_i^T : 1 \le i \le j \le n\}.
\]
By change of basis we can without loss of generality assume that $x_i$ is the $i^{\rm th}$ standard basis vector.
If $i\neq j$ then $x_ix_j^T + x_jx_i^T$ is the matrix where every entry is zero aside from $(i,j)$ and $(j,i)$ which are both $1$.
If $i = j$, then $x_ix_j^T + x_jx_i^T$ is the matrix that is all zeros aside from a $2$ at the $(i,i)$ entry.
Thus $\mathcal{B}$ is linearly independent and~\eqref{equation: normals spanned by kernel outer products} implies
that $\mathcal{B} \subset (T_SW)^{\perp_n} \cap \mathcal{S}^n$.

In light of~\eqref{equation: tangent space from non-symmetric}, the lemma follows if we prove the 
the following:
\[
    \{\Omega \in \mathcal{S}^n : \Omega S = 0\} = (T_S W)^{\perp_n} \cap
    \mathcal{S}^n 
 \]
Suppose 
$\Omega \in (T_S W)^{\perp_n} \cap \mathcal{S}^n$ and define $k:=\rank(\Omega)$.
Then $\Omega$ has a spectral decomposition $\Omega = U\Lambda U^T$ where $U$ is an $n\times k$ matrix with orthonormal columns
and $\Lambda$ is a nonsingular diagonal $k\times k$ matrix.
From~\eqref{equation: normals spanned by kernel outer products} it follows that $\Omega S= 0$.

Now suppose $\Omega \in \mathcal{S}^n$ and $\Omega S =0$.
As before, we have a spectral decomposition $\Omega = U\Lambda U^T$ where $U$ has linearly independent columns so $U^T S = 0$.
Thus the columns of $U$ are in the kernel of $S$.
From its spectral decomposition, we see that 
$\Omega$ can be written as a sum of outer products
of vectors in the kernel of $S$ as in~\eqref{equation: normals spanned by kernel outer products}.
This implies that $\Omega \in (T_S W)^\perp$.
\end{proof}

\begin{lemma}\label{lem:sub}
    Let $S\in \mathcal{S}^n_0(d)$ and 
    let $L \subseteq \mathcal{S}^n$ be a linear space.
    Then there exists a nonzero $\Omega \in L$ such that $\Omega S = 0$
    if and only if $\pi_L: \mathcal{S}^n_0(d) \rightarrow L$ is not a submersion at $S$.
\end{lemma}
\begin{proof}
    In light of Lemma~\ref{lem:cnormal},
    the existence of some $\Omega \in L\setminus\{0\}$ such that $\Omega S = 0$
    is equivalent to the following
    \[
        \dim((L) \cap (T_S \mathcal{S}^n_0(d))^{\perp}) \ge 1
    \]
    which is equivalent to 
    $\dim(\pi_L(T_S\mathcal{S}^n_0(d))) < \dim L$ which is what it means for $\pi_L: \mathcal{S}^n_0(d) \rightarrow L$
    to not be a submersion at $S$.
\end{proof}

\begin{lemma}\label{lemma: full dim proj iff no stress}
    Let $L \subseteq \mathcal{S}^n$ be a linear subspace.
    Then $\pi_L(\mathcal{S}^n_0(d))$ is full-dimensional in $L$ if and only if for every regular point $S \in \mathcal{S}^n_0(d)$
    of the restriction of $\pi_L$ to $\mathcal{S}^n_0(d)$, if $\Omega \in L$ and $\Omega S = 0$ 
    then $\Omega = 0$.
\end{lemma}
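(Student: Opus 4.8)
The plan is to relate the full-dimensionality of $\pi_L(V(n,d))$ to the rank of the differential $d\pi_L$ at regular points of $V(n,d)$, and then use Lemma~\ref{lem:cnormal} to translate the kernel of $d_S\pi_L$ into the stress condition $\Omega S = 0$. First I would recall that $V(n,d)$ is an irreducible quasi-affine variety (a smooth manifold away from lower-rank strata), so the image $\pi_L(V(n,d))$ is a constructible set whose dimension equals the maximal rank of $d_S\pi_L$ over $S$ in the regular locus; equivalently, $\pi_L(V(n,d))$ is full-dimensional in $L$ if and only if $\pi_L$ restricted to $V(n,d)$ is a submersion at some $S$, i.e.\ $d_S\pi_L : T_S V(n,d) \to L$ is surjective for some regular point $S$. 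Since the set of $S$ where $d_S\pi_L$ has maximal rank is Zariski-open and dense in $V(n,d)$, this holds at \emph{some} regular point if and only if it holds at \emph{every} regular point of the restriction of $\pi_L$ (here "regular point of the restriction" should be read as a point where the differential achieves its generic rank, matching the statement's phrasing).

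Next I would unwind what surjectivity of $d_S\pi_L$ means. Since $\pi_L$ is the restriction of a linear projection, its differential at $S$ is just $\pi_L$ restricted to $T_S V(n,d)$. Thus $d_S\pi_L$ fails to be surjective exactly when $\pi_L(T_S V(n,d)) \subsetneq L$, which happens if and only if there is a nonzero $\Omega \in L$ orthogonal to $\pi_L(T_S V(n,d))$ — and since $\Omega \in L$, this is the same as $\Omega$ being orthogonal to all of $T_S V(n,d)$. By Lemma~\ref{lem:cnormal}, $\Omega \perp T_S V(n,d)$ is equivalent to $\Omega S = 0$. Putting the two equivalences together: $\pi_L(V(n,d))$ is full-dimensional $\iff$ $d_S\pi_L$ is surjective at every regular point of the restriction $\iff$ at every such regular point $S$, the only $\Omega \in L$ with $\Omega S = 0$ is $\Omega = 0$. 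This is exactly the claimed statement, so I would essentially be invoking Lemma~\ref{lem:sub} pointwise: Lemma~\ref{lem:sub} already says that $\pi_L$ is non-submersive at $S$ iff there is a nonzero stress $\Omega \in L$ with $\Omega S = 0$, and the present lemma is the "global" packaging of that.

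The main subtlety — and the step I would be most careful about — is the bookkeeping around "regular point." One must be clear that "regular point of the restriction of $\pi_L$ to $V(n,d)$" means a point where $d_S\pi_L$ attains its generic (maximal) rank over $V(n,d)$, not a smooth point of the fiber or anything else; with that reading, the set of such $S$ is a nonempty Zariski-open subset of $V(n,d)$, and "full-dimensional image" is literally the statement that this generic rank equals $\dim L$. The only real content beyond definitions is the observation that the dimension of the image of an irreducible variety under a morphism equals the generic rank of the differential, which is standard (generic smoothness / the fiber-dimension theorem over $\mathbb{R}$ applied to the Zariski-smooth locus), combined with Lemma~\ref{lem:cnormal}. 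No case analysis or hard estimate is needed; the proof is a short chain of equivalences.
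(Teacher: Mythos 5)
Your proposal is correct and takes essentially the same route as the paper: both arguments reduce full-dimensionality of $\pi_L(V(n,d))$ to surjectivity of the differential of $\pi_L$ at regular points (the paper via Sard's theorem and the submersion theorem, you via the generic-rank characterization of the image dimension), and then translate non-surjectivity into the existence of a nonzero $\Omega \in L$ orthogonal to $T_S V(n,d)$, i.e.\ $\Omega S = 0$, using Lemma~\ref{lem:cnormal} (equivalently, the pointwise statement of Lemma~\ref{lem:sub}).
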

\begin{proof}
    First assume that $\pi_L(\mathcal{S}^n_0(d))$ is full dimensional in $L$ and that $S$ is a regular point of the restriction of $\pi_L$ to $\mathcal{S}^n_0(d)$.
    Then Sard's Theorem~\cite{guillemin2010differential} implies that $\pi_L(T_S \mathcal{S}^n_0(d)) = L$.     
    Let $\Omega \in L$ satisfy $\Omega S = 0$,
    let $Y \in L$ and let $X \in T_S \mathcal{S}^n_0(d)$ satisfy $\pi_L(X) = Y$.
    Lemma~\ref{lem:cnormal} implies that $\tr(\Omega X) = 0$.
    Since orthogonal projection in an inner product space is equivariant with respect to the inner product,
    this implies that $\tr(\pi_L(\Omega)\pi_L(X)) = 0$.
    Since $\Omega \in L$ and $\pi_L(X) = Y$, this gives $\tr(\Omega Y) = 0$.
    We have thus proven that $\Omega \in L^\perp$.
    Since $\Omega \in L$ as well, $\Omega = 0$.

    Now assume that for every regular point $S$ of the restriction of $\pi_L$ to $\mathcal{S}^n_0(d)$,
    if $\Omega \in L$ and $\Omega S = 0$ then $\Omega = 0$.
    Let $S$ be a regular point of this restriction.
    Lemma~\ref{lem:cnormal} implies that $L \cap (T_S \mathcal{S}^n_0(d))^\perp = 0$
    and thus $L^\perp + T_S\mathcal{S}^n_0(d) = \mathcal{S}^n$.
    Therefore $\pi_L(T_S\mathcal{S}^n_0(d)) = L$.
    Then the submersion theorem~\cite{guillemin2010differential} implies that
    $\pi_L(\mathcal{S}^n_0(d))$ is full-dimensional in $L$.
\end{proof}

\begin{thm}\label{thm: gcr is an upper bound for mlt}
    Let $L \subseteq \mathcal{S}^n$ be a linear subspace.
    Then $\gcr(L)$ is the minimum $d$ such that for all generic PSD $S$ of rank $d$,
    there does not exist an $\Omega \in L$ such that $\Omega S = 0$.
    Moreover, $\gcr(L) \ge \mlt(L)$.
\end{thm}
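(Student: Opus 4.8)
The plan is to prove the two assertions in order, reusing the machinery developed for linear concentration models.

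For the first assertion, the key observation is that the condition in Lemma~\ref{lemma: full dim proj iff no stress} quantifies over regular points $S$ of the restriction of $\pi_L$ to $V(n,d)$, and the set of such regular points is, by Sard's theorem together with the fact that $V(n,d)$ is a smooth irreducible quasi-affine variety, a dense (indeed full-measure) subset of $V(n,d)$. Restricting further to PSD matrices of rank $d$ still leaves a full-dimensional semialgebraic subset, since $\mathcal{S}^n_+(d)$ is full-dimensional in $V(n,d)$. So I would argue: if $\gcr(L) \le d$, i.e.\ $\pi_L(V(n,d))$ is full-dimensional in $L$, then Lemma~\ref{lemma: full dim proj iff no stress} gives that at every regular point $S$ — in particular for generic PSD $S$ of rank $d$ — no nonzero $\Omega \in L$ kills $S$. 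Conversely, if for generic PSD $S$ of rank $d$ there is no such $\Omega$, then in particular there is \emph{some} regular point $S$ (regular points being generic, the two generic conditions have nonempty intersection) with the no-stress property, which by Lemma~\ref{lem:sub} means $\pi_L$ is a submersion at $S$, hence $\pi_L(V(n,d))$ is full-dimensional in $L$, i.e.\ $\gcr(L)\le d$. Thus $\gcr(L)$ equals the minimum such $d$. The only subtlety here is making sure ``generic PSD $S$ of rank $d$'' and ``regular point of $\pi_L|_{V(n,d)}$'' are compatible notions of genericity; this is fine because both exceptional sets are contained in proper Zariski-closed (equivalently, lower-dimensional semialgebraic) subsets of the irreducible set $V(n,d)$, so their union is still proper.

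For the second assertion, $\gcr(L) \ge \mlt(L)$, let $d = \gcr(L)$. By the first part, for generic PSD $S$ of rank $d$ there is no nonzero $\Omega \in L$ with $\Omega S = 0$; a fortiori there is no nonzero \emph{PSD} $\Omega \in \mathcal{S}^n_+ \cap L$ with $\Omega S = 0$. By the equivalence $(1)\Leftrightarrow(3)$ in Theorem~\ref{thm: existence of optimum criteria}, this means the optimization problem~\eqref{eq: mle optimization} has a (unique) solution for generic $S \in \mathcal{S}^n_+$ of rank $d$. By the definition of $\mlt(L)$ as the minimum $d$ for which~\eqref{eq: mle optimization} is solvable for all generic rank-$d$ $S \in \mathcal{S}^n_+$, we conclude $\mlt(L) \le d = \gcr(L)$. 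One should note that Theorem~\ref{thm: existence of optimum criteria} requires $L \cap \mathcal{S}^n_{++} \neq \emptyset$ and $S \neq 0$; if $L$ contains no positive definite matrix then $\mlt(L) = \infty$ by convention and there is nothing subtle to check — but actually the inequality $\gcr(L) \ge \mlt(L)$ would then be vacuous or trivially false in the sense that $\mlt(L)=\infty$, so one should either assume $L\cap\mathcal{S}^n_{++}\neq\emptyset$ or interpret the statement as conditional on that (as elsewhere in the paper).

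The main obstacle, such as it is, is bookkeeping about genericity: matching up the "regular point" language of Lemma~\ref{lemma: full dim proj iff no stress}, the "full-dimensional projection" language of Definition~\ref{defn:generic completion rank}, and the "generic PSD $S$ of rank $d$" phrasing in the theorem statement, and checking that passing from $V(n,d)$ to $\mathcal{S}^n_+(d)$ does not collapse dimensions or wreck density. Everything else is a direct citation of Theorem~\ref{thm: existence of optimum criteria}, Lemma~\ref{lem:sub}, and Lemma~\ref{lemma: full dim proj iff no stress}.
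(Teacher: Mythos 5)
Your proposal is correct and follows essentially the same route as the paper: the first assertion is deduced from Lemma~\ref{lemma: full dim proj iff no stress} (via Lemma~\ref{lem:sub}) with the non-generic locus taken to be the critical points of $\pi_L$ restricted to $V(n,d)$, and the inequality $\gcr(L)\ge\mlt(L)$ then follows from the equivalence of conditions (1) and (3) in Theorem~\ref{thm: existence of optimum criteria}. Your extra care about reconciling ``regular point of $\pi_L|_{V(n,d)}$'' with ``generic PSD $S$ of rank $d$'', and the remark that the whole statement presupposes $L\cap\mathcal{S}^n_{++}\neq\emptyset$, only makes explicit what the paper's two-line proof leaves implicit.
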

\begin{proof}
    The first statement follows from Lemma~\ref{lemma: full dim proj iff no stress},
    taking the non-generic locus to be the singular points of $\pi_L$.
    The second statement now follows from part 3 of Theorem~\ref{thm: existence of optimum criteria}.
\end{proof}

\begin{cor}\label{cor: mlt is gcr for generic}
    The maximum likelihood threshold of a generic linear subspace $L \subseteq \mathcal{S}^n$ is equal to its generic completion rank.
\end{cor}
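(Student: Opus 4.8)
The plan is to combine Theorem~\ref{thm: main} with Theorem~\ref{thm: gcr is an upper bound for mlt}, observing that both quantities are computed by the same arithmetic threshold for generic $L$. First I would fix $n$ and $m$ with $1 \le m \le \binom{n+1}{2}$, and let $d^*$ be the minimum $d$ such that $m \le nd - \binom{d}{2}$. Theorem~\ref{thm: main} already gives a Zariski-closed proper subset $E \subset \gr(m, \mathcal{S}^n)$ outside of which (and among $L$ containing a positive definite matrix) $\mlt(L) = d^*$. So it suffices to show that, after possibly enlarging $E$ by another Zariski-closed proper subset, we also have $\gcr(L) = d^*$ for generic $L$.

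For the generic completion rank side, I would use the characterization in Theorem~\ref{thm: gcr is an upper bound for mlt}: $\gcr(L)$ is the minimum $d$ such that for generic PSD $S$ of rank $d$ there is no nonzero $\Omega \in L$ with $\Omega S = 0$ (equivalently, by Lemma~\ref{lemma: full dim proj iff no stress}, the minimum $d$ such that $\pi_L(V(n,d))$ is full-dimensional in $L$). To pin this down for generic $L$, I would argue in two directions. For the upper bound $\gcr(L) \le d^*$: when $m \le nd^* - \binom{d^*}{2}$, Proposition~\ref{prop: easy direction} (together with Lemma~\ref{lemma: generic linear spaces of matrices don't have all subspaces of Rn as kernels}) gives a Zariski-closed proper subset of $\gr(m,\mathcal{S}^n)$ outside of which there is an $S \in V(n,d^*)$ with no nonzero $\Omega \in L$ killing it, so $\pi_L(V(n,d^*))$ is full-dimensional and $\gcr(L) \le d^*$. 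For the lower bound $\gcr(L) \ge d^*$: by minimality of $d^*$, we have $m \ge n(d^*-1) - \binom{d^*-1}{2} + 1$, so Proposition~\ref{prop: hard direction} applies with $d = d^* - 1$ and yields, outside a Zariski-closed proper subset, an open set $U \subseteq \mathcal{S}^n_+(d^*-1)$ such that every $S \in U$ admits a nonzero PSD $\Omega \in L$ with $\Omega S = 0$; this forces $\pi_L(V(n, d^*-1))$ to be \emph{not} full-dimensional in $L$ (a nonzero $\Omega \in L^\perp$-obstruction persists on an open, hence generic, subset), so $\gcr(L) > d^* - 1$, i.e. $\gcr(L) \ge d^*$. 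Intersecting the finitely many exceptional sets arising here and in Theorem~\ref{thm: main} with the (irreducible, by Remark~\ref{remark: main in generic language}) locus of $L$ containing a positive definite matrix produces a single Zariski-closed proper subset off of which $\mlt(L) = d^* = \gcr(L)$.

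The one point that needs a little care — and the place I'd expect the main friction — is reconciling the two slightly different notions of ``generic'' that appear: Proposition~\ref{prop: hard direction} produces an open (Euclidean) set $U$ of rank-$d$ matrices carrying a stress, whereas the $\gcr$ characterization speaks of the \emph{generic} point of the irreducible variety $V(n,d)$ (i.e. the regular points of $\pi_L$ restricted there). Since $V(n,d)$ is irreducible and a nonempty Euclidean-open subset of an irreducible variety is Zariski-dense, the existence of a stress on all of $U$ implies the generic point of $V(n, d^*-1)$ carries a stress, which is exactly what Lemma~\ref{lemma: full dim proj iff no stress} needs to conclude $\pi_L(V(n,d^*-1))$ is not full-dimensional. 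Once that bridge is in place the rest is bookkeeping, and in fact the whole corollary can be stated more crisply: the arithmetic quantity $\min\{d : m \le nd - \binom{d}{2}\}$ simultaneously computes $\mlt(L)$ (Theorem~\ref{thm: main}) and $\gcr(L)$ (the argument above) for generic $L$, so the two agree.
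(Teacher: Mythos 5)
Your proposal is correct and follows essentially the same route as the paper: combine Theorem~\ref{thm: main} with the characterization and inequality in Theorem~\ref{thm: gcr is an upper bound for mlt}, getting $\gcr(L)\le d^*$ from Proposition~\ref{prop: easy direction} and $\gcr(L)\ge d^*$ from the lower-bound machinery. The only (immaterial) difference is that the paper obtains $\gcr(L)\ge d^*$ directly from $\gcr(L)\ge\mlt(L)=d^*$, whereas you re-derive it from Proposition~\ref{prop: hard direction} applied at $d^*-1$ together with the Zariski-density of a Euclidean-open subset of the irreducible variety $V(n,d^*-1)$ --- a bridge you handle correctly but which the shorter route avoids.
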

\begin{proof}
    Given Theorem~\ref{thm: gcr is an upper bound for mlt},
    Theorem~\ref{thm: main} implies that the generic completion rank of a generic $L \in \gr(m,\mathcal{S}^n)$ is at least the minimum $d$ such that $m \ge dn-\binom{d}{2}$ and
    Proposition~\ref{prop: hard direction} implies that the generic completion rank is at most this $d$.
\end{proof}

\section*{Acknowledgments}
LST  was partially supported by UK Research and Innovation 
(grant number UKRI1112), under the EPSRC Mathematical Sciences 
Small Grant scheme.  This work was initiated during a visit by DIB 
to St.~Andrews supported by the Heilbronn Institute for Matheamtical 
Research (HIMR).

\bibliographystyle{abbrvnat}
\bibliography{linear}
\end{document}